\author{Steffen Oppermann}
\address{Institutt for matematiske fag \\ NTNU \\ 7491 Trondheim \\ Norway}
\email{Steffen.Oppermann@math.ntnu.no}
\author{Jan \v S\v tov\'\i\v cek}
\address{Charles University in Prague \\ Faculty of Mathematics and Physics \\
Department of Algebra \\ Sokolovska 83 \\ 186 75 Praha 8 \\ Czech Republic}
\email{stovicek@karlin.mff.cuni.cz}
\title[Bounded derived category and perfect ghosts]%
{Generating the bounded derived category and perfect ghosts}
\date{\today}
\subjclass[2010]{18E30 (primary), 16E35, 14F05 (secondary)}
\keywords{Strongly finitely generated triangulated categories, thick subcategories, Ghost Lemma}
\thanks{The second named author is supported by the research project MSM~0021620839 and by the grant GA\v{C}R P201/10/P084.}
\renewcommand{\iff}{if and only if }
\newcommand{\st}{such that }
\newcommand{\Der}[1]{{\mathbf{D}({#1})}}
\newcommand{\Db}[1]{{\mathbf{D}^{\rm b}({#1})}}
\newcommand{\Dm}[1]{{\mathbf{D}^-({#1})}}
\newcommand{\Dle}[2]{{\mathbf{D}^{\le{#1}}({#2})}}
\newcommand{\Dge}[2]{{\mathbf{D}^{\ge{#1}}({#2})}}
\newcommand{\add}{\mathrm{add}}
\newcommand{\Prod}{\mathrm{Prod}}
\newcommand{\Cpx}[1]{{\mathbf{C}({#1})}}
\newcommand{\Cb}[1]{{\mathbf{C}^{\rm b}({#1})}}
\newcommand{\Cm}[1]{{\mathbf{C}^-({#1})}}
\newcommand{\Cle}[2]{{\mathbf{C}^{\le{#1}}({#2})}}
\newcommand{\modR}{\mathrm{mod}\hbox{-}R}
\newcommand{\modL}{\mathrm{mod}\hbox{-}\Lambda}
\newcommand{\projL}{\mathrm{proj}\hbox{-}\Lambda}
\newcommand{\perfL}{\mathrm{perf}\hbox{-}\Lambda}
\newcommand{\QcohX}{{\mathrm{Qcoh}\,\mathbb{X}}}
\newcommand{\cohX}{{\mathrm{coh}\,\mathbb{X}}}
\newcommand{\QcohP}{\mathrm{Qcoh}(\mathbf{P}^1_\mathbb{C})}
\newcommand{\vectX}{\mathrm{vect}\,\mathbb{X}}
\newcommand{\Ab}{\mathrm{Ab}}
\DeclareMathOperator{\fp}{fp}
\newcommand{\Z}{\mathbb{Z}}
\DeclareMathOperator{\Proj}{Proj}
\DeclareMathOperator{\Hom}{Hom}
\DeclareMathOperator{\Ext}{Ext}
\DeclareMathOperator{\Ker}{Ker}
\newcommand{\thick}[2]{\langle{#2}\rangle_{#1}}
\DeclareMathOperator{\gldim}{gldim}
\newcommand{\A}{\mathcal{A}}
\newcommand{\B}{\mathcal{B}}
\newcommand{\C}{\mathcal{C}}
\newcommand{\E}{\mathcal{E}}
\newcommand{\G}{\mathcal{G}}
\newcommand{\T}{\mathcal{T}}
\newcommand{\X}{\mathbb{X}}
\newcommand{\OX}{\mathcal{O}_\X}
\theoremstyle{plain}
\newtheorem{thm}{Theorem}
\newtheorem{prop}[thm]{Proposition}
\newtheorem{lem}[thm]{Lemma}
\newtheorem{cor}[thm]{Corollary}
\newtheorem{obs}[thm]{Observation}
\theoremstyle{definition}
\newtheorem{defn}[thm]{Definition}
\newtheorem{set}[thm]{Setup}
\theoremstyle{remark}
\newtheorem{rem}[thm]{Remark}
\begin{document}
\begin{abstract}
We show, for a wide class of abelian categories relevant in representation theory and algebraic geometry, that the bounded derived categories have no non-trivial strongly finitely generated thick subcategories containing all perfect complexes. In order to do so we prove a strong converse of the Ghost Lemma for bounded derived categories.
\end{abstract}
\maketitle

\section{Introduction}
\label{sec.intro}

Bondal and van den Bergh \cite{BvdB} have introduced the notion of a triangulated category being strongly finitely generated. They have shown that this property is useful when studying the representability of certain cohomological functors.

\begin{defn}[\cite{BvdB}]
Let $\mathcal{T}$ be a triangulated category. For $T \in \mathcal{T}$ let
\begin{align*}
& \thick{}{T} = \thick{1}{T} = \add \{T[i] \mid i \in \mathbb{Z} \} \text{, and} \\
& \thick{n+1}{T} = \add \{ {\rm cone}(f) \mid f \in \Hom_{\mathcal{T}}(\thick{}{T}, \thick{n}{T}) \}.
\end{align*}
The category $\mathcal{T}$ is called \emph{strongly finitely generated} if there is $T \in \mathcal{T}$ and $n \in \mathbb{N}$ such that $\mathcal{T} = \thick{n}{T}$.
\end{defn}

In Rouquier's terminology \cite{Rou} strongly finitely generated triangulated categories are those which have finite dimension.

The main result of this paper is that subcategories of bounded derived categories, which satisfy a natural assumption making sure that they are not too small (in particular they should not be the bounded derived category of an abelian subcategory), can only be strongly finitely generated if they actually coincide with the entire bounded derived category.

This result came as a surprise. One might expect subcategories of strongly finitely generated categories to be strongly finitely generated again. However here we show that actually the opposite is the more typical behavior.

More precisely we will show the following:

\begin{thm}[see Theorem~\ref{thm.strongfg_general}] \label{thm.generating_intro} ~

\begin{enumerate}
\item
Let $\Lambda$ be a noetherian algebra, that is a module finite algebra over a commutative noetherian ring. Let $\T$ be a thick subcategory of $\Db\modL$ such that $\perfL \subseteq \T$. If $\T$ is strongly finitely generated, then $\T = \Db\modL$.
\item
Let $\X$ be a projective scheme over a commutative noetherian ring $R$. Let $\mathcal{T}$ be a thick subcategory of $\Db\cohX$ such that $\Db\vectX \subseteq \T$. If $\T$ is strongly finitely generated then $\T = \Db\cohX$.
\end{enumerate}
\end{thm}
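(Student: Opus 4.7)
The plan is to prove the contrapositive by combining the hypothesis $\perfL \subseteq \T$ (resp.\ $\Db\vectX \subseteq \T$) with the strong converse of the Ghost Lemma advertised in the abstract. The classical Ghost Lemma asserts that $\T = \thick{n}{G}$ forces every length-$n$ composition of $G$-ghosts to vanish; the strong converse should state conversely that $X \in \thick{n}{G}$ whenever every length-$n$ composition of $G$-ghosts starting at $X$ vanishes. Granting this, it suffices to exhibit, for each candidate generator $G \in \T$ and each $n \in \mathbb{N}$, some object $X \in \T$ admitting a nonzero length-$n$ composition of $G$-ghosts, thereby ruling out $\T = \thick{n}{G}$.

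To manufacture such witnesses, assume $\T \ne \Db\modL$. Since $\T$ is thick and every bounded complex is an iterated extension of its cohomology stalks, there exists a module $M \in \modL$ with $M \notin \T$; this $M$ necessarily has infinite projective dimension, for otherwise its stalk complex would be perfect and hence in $\T$. Fix a projective resolution $\cdots \to P_1 \to P_0 \to M \to 0$ and form the stupid truncations $C_k$ obtained by zeroing the resolution above degree $-k$; each $C_k$ is a perfect complex, lies in $\T$, and fits into a triangle
\[ \Omega^{k+1} M[k] \longrightarrow C_k \longrightarrow M \longrightarrow \Omega^{k+1} M[k+1] \]
in $\Db\modL$. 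In the projective-scheme setting, Serre's theorem supplies a vector-bundle resolution of any coherent sheaf, and the same truncations place $C_k$ in $\Db\vectX \subseteq \T$. Now fix $G \in \T$ and construct, starting at $C_k$, the standard sequence of ghosts: at each stage map in a finite direct sum of shifts of $G$ realizing a set of generators for $\bigoplus_i \Hom(G[i], -)$ (finite by noetherianity, respectively by coherence) and replace by the cone. Each step is a $G$-ghost in $\T$, and the strong converse Ghost Lemma declares the length-$n$ composition zero precisely when $C_k \in \thick{n}{G}$.

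The crux, and main obstacle, is to show that for any fixed $n$ the inclusion $C_k \in \thick{n}{G}$ fails for sufficiently large $k$. The intuition is that once $k$ is much larger than the cohomological amplitude of $G$, the syzygy term $\Omega^{k+1} M[k+1]$ in the triangle above is supported in degrees beyond the reach of $\thick{n}{G}$, so uniform membership $C_k \in \thick{n}{G}$ for all $k$ would transport through the triangle to force $M \in \thick{n}{G} \subseteq \T$, contradicting the choice of $M$. Making this precise — quantifying how $G$-complexity propagates across a triangle whose third term is far-shifted, and converting amplitude bounds on $G$ into a usable estimate — is the heart of the argument, and presumably where the paper invests its main technical effort in establishing the strong converse itself.
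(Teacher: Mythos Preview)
Your instinct to reduce to a converse Ghost Lemma is exactly right, and you have correctly located the gap: the step ``$C_k \in \thick{n}{G}$ for all $k$ forces $M \in \thick{n}{G}$'' is unproven, and the weak converse you state (ghosts ranging over all of $\Db\A$) is not strong enough to close it directly. The paper does not fill this gap by analyzing how $\thick{n}{G}$-membership propagates along your triangles. Instead it proves a sharper converse (Theorem~\ref{thm.anti-ghost}, condition~(4)): $X \in \thick{n}{G}$ already follows once every length-$n$ composition of $G$-ghosts \emph{lying entirely in $\Db\E$} (that is, in $\perfL$, respectively $\Db\vectX$), followed by an arbitrary map into $X$, vanishes. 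With this strengthening the deduction of Theorem~\ref{thm.generating_intro} becomes a two-line argument and your $C_k$ are unnecessary: if $\T = \thick{n}{T}$ and $\Db\E \subseteq \T$, then the ordinary Ghost Lemma kills every $n$-fold $T$-ghost composition inside $\Db\E$, and condition~(4) then places every $X \in \Db\A$ in $\thick{n}{T}$.

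Your truncation idea is not wasted---it is essentially how the paper bridges the weak and strong forms of the converse (the implication (4)$\Rightarrow$(2) in Theorem~\ref{thm.anti-ghost} is proved by brutally truncating an arbitrary chain of ghosts in $\Dm\A$ to one in $\Db\E$). The point you missed is that the truncation should be applied to the chain of \emph{ghosts}, not to the target object $M$; once the ghost condition is restricted to $\Db\E$ it becomes a condition that is visibly independent of $X$, so it transfers from the perfect complexes to all of $\Db\A$ for free. A minor remark: the paper works with covariant ghosts ($\Hom(-,G[i])$ vanishes) and left approximations, dual to your right-approximation setup; this is inessential, but note that your finiteness argument for approximations only gives $\add G$-approximations, not $\thick{}{G}$-approximations, since $\Hom(G[i],-)$ need not vanish for large $i$---this is exactly the obstruction the paper overcomes via products in $\Dm\A$ and a cocompactness argument.
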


In many cases it is known that the bounded derived category is strongly finitely generated. If $\Lambda$ is an artin algebra, $\Db\modL$ is strongly finitely generated by~\cite[Proposition~7.37]{Rou}. If $\X$ is a separated scheme of finite type over a perfect field, $\Db\cohX$ is strongly finitely generated by~\cite[Theorem~7.38]{Rou} or~\cite[Theorem 6.3]{Lunts}. Thus for those cases we obtain the following consequence of Theorem~\ref{thm.generating_intro}:

\begin{cor} \label{cor.generating_intro}
\begin{enumerate}
\item
Let $\Lambda$ be an artin algebra. A thick subcategory $\mathcal{T}$ of $\Db\modL$ such that $\perfL \subseteq \T$ is strongly finitely generated if and only if $\T = \Db\modL$.
\item
Let $\X$ be a projective scheme of finite type over a finitely generated algebra over a perfect field. A thick subcategory $\mathcal{T}$ of $\Db\cohX$ such that $\Db\vectX \subseteq \T$ is strongly finitely generated if and only if $\T = \Db\cohX$.
\end{enumerate}
\end{cor}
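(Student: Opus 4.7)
The corollary is a direct combination of Theorem~\ref{thm.generating_intro} with the strong finite generation results for full bounded derived categories that are recalled in the paragraph preceding it. So my plan is simply to assemble the two directions of the equivalence and to check that the hypotheses of the corollary match those of the results being invoked; no essentially new argument is needed.

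For the implication ``strongly finitely generated $\Rightarrow$ $\T = \Db$'' I would quote Theorem~\ref{thm.generating_intro} verbatim. What has to be checked is that the setup of the corollary is a special case of the setup of the theorem. In part~(1), an artin algebra $\Lambda$ is by definition module-finite over a commutative artinian ring (for instance, its center), and commutative artinian rings are commutative noetherian; hence $\Lambda$ is a noetherian algebra in the sense of Theorem~\ref{thm.generating_intro}(1). In part~(2), a finitely generated algebra over a perfect field is noetherian by the Hilbert basis theorem and so can serve as the ring $R$ in Theorem~\ref{thm.generating_intro}(2).

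For the converse implication ``$\T = \Db \Rightarrow$ $\T$ strongly finitely generated'' I would simply observe that if $\T$ coincides with the entire bounded derived category then the conclusion is known. In part~(1), $\Db\modL$ is strongly finitely generated by \cite[Proposition~7.37]{Rou}. In part~(2), $\X$ is separated because projective morphisms are separated, and it is of finite type over the perfect field $k$ by composing the structure morphism $\X \to \Spec R$ with $\Spec R \to \Spec k$; hence \cite[Theorem~7.38]{Rou} or \cite[Theorem~6.3]{Lunts} applies and $\Db\cohX$ is strongly finitely generated.

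Because both directions are obtained by citation, there is no genuine obstacle in the proof of the corollary itself: the entire substantive content, namely the forward implication under the weaker hypotheses of Theorem~\ref{thm.generating_intro}, has already been absorbed into that theorem. The only routine task is the small hypothesis-matching described above.
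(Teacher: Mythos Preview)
Your proposal is correct and mirrors exactly what the paper does: the corollary is stated as an immediate consequence of Theorem~\ref{thm.generating_intro} together with the strong finite generation results of \cite{Rou,Lunts}, and the surrounding text (including the paragraph after the corollary) carries out precisely the hypothesis-matching you describe. There is no separate formal proof in the paper beyond this.
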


Note that the condition in (2) just means that the scheme is both projective over a noetherian ring, and of finite type over a perfect field, and hence it is also separated over this field. Thus both our Theorem~\ref{thm.generating_intro}(2) and the above-mentioned result of \cite{Lunts,Rou} apply.

Theorem~\ref{thm.generating_intro}(1) and the corollary get a more concrete flavour if one compares them to several classification results on the thick subcategories $\T$ \st $\perfL \subseteq \T \subseteq \Db\modL$. Such subcategories correspond bijectively to thick subcategories of the Verdier quotient $\Db\modL/\perfL$, which is for Gorenstein algebras $\Lambda$ triangle equivalent to $\underline{\textrm{MCM}}(\Lambda)$, the stable category of maximal Cohen-Macaulay $\Lambda$-modules; see~\cite[Theorem 4.4.1]{Buch}. Let us mention a few classification results on subcategories of $\underline{\textrm{MCM}}(\Lambda)$, without any ambition to give a complete list. If $\Lambda = kG$ is a finite dimensional group algebra over a field $k$, we refer to~\cite[Theorem 3.4]{BCR}. For a commutative abstract hypersurface local ring $\Lambda$, the Main Theorem of~\cite{Taka} applies. Finally, an explicitly computed example for $\Lambda = k\langle x,y\rangle/(x^2,y^2,xy+yx)$ can be found in~\cite[Proposition 21]{Sto}. In all these setups, the references give an explicit list of thick subcategories of $\underline{\textrm{MCM}}(\Lambda)$, and thus of thick subcategories of $\Db\modL$ containing perfect complexes. Our Theorem~\ref{thm.generating_intro}(1) now shows that none of the non-trivial categories in these lists is strongly finitely generated.

The main ingredient for the proof of Theorem~\ref{thm.generating_intro} is a converse of the Ghost Lemma (see \cite{Beli} for some background on the Ghost Lemma): A map is called a \emph{ghost} for an object $M$ if the induced map between covariant $\Hom$-functors vanishes on $\{M[i] \mid i \in \mathbb{Z} \}$ (see Definition~\ref{def.ghost}). Then we show the following:

\begin{thm}[see Theorem~\ref{thm.anti-ghost}]
Let $M$ and $X$ be objects in $\Db{\modL}$ (resp.\ $\Db\cohX$) for $\Lambda$ (resp.\ $\X$) as in Theorem~\ref{thm.generating_intro}. Then $X \in \thick{n}{M}$ if and only if the composition of any sequence of $n$ maps
\[ \xymatrix{ X_n \ar[r] & X_{n-1} \ar[r] & \cdots \ar[r] & X_1 \ar[r] & X, } \]
all of which are $M$-ghosts, vanishes.
\end{thm}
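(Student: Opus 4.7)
The plan is to prove the two implications separately. The ``only if'' direction is a form of the classical Ghost Lemma, which I would establish by induction on $n$: in the base case $n = 1$, an object $X \in \thick{1}{M}$ is a direct summand of some $\bigoplus M[i_j]$, and any $M$-ghost into it must vanish by composing with the projections onto the summands; the inductive step would use the defining triangle for $\thick{n+1}{M}$ (the cone of a map from $\thick{1}{M}$ to $\thick{n}{M}$) together with a standard diagram chase splitting an $(n+1)$-fold composition across the two factors.

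For the converse, my plan is to construct, for each object $Y$, a distinguished $M$-ghost $\gamma_Y : Y^{(1)} \to Y$ whose cone lies in $\thick{1}{M}$. I would obtain $\gamma_Y$ as the fibre of a morphism $\pi_Y : Y \to N_Y$ into some $N_Y \in \thick{1}{M}$ for which $\pi_Y^* : \Hom(N_Y, M[i]) \to \Hom(Y, M[i])$ is surjective for every $i$; the long exact $\Hom$-sequence of the triangle $Y^{(1)} \to Y \to N_Y \to Y^{(1)}[1]$ then forces $\gamma_Y$ to be an $M$-ghost. Iterating this construction yields ghosts $\gamma_k : Y^{(k+1)} \to Y^{(k)}$ with $Y^{(0)} = Y$; setting $\Gamma_n := \gamma_0\gamma_1\cdots\gamma_{n-1}$, an induction using the octahedral axiom to combine the cones of successive $\gamma_k$ shows that ${\rm cone}(\Gamma_n) \in \thick{n}{M}$. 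The conclusion would be immediate: $\Gamma_n$ is itself a composition of $n$ $M$-ghosts into $Y$, so the hypothesis forces $\Gamma_n = 0$, the triangle $Y^{(n)} \xrightarrow{0} Y \to {\rm cone}(\Gamma_n) \to Y^{(n)}[1]$ splits, and $Y$ appears as a direct summand of ${\rm cone}(\Gamma_n) \in \thick{n}{M}$, placing $Y$ in $\thick{n}{M}$.

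The principal technical obstacle I expect to face is the construction of $\pi_Y$ inside the bounded derived category. Since $\Hom_{\Db\modL}(Y, M[i])$ can be nonzero for infinitely many $i$ (for instance when $\Lambda$ is self-injective of infinite global dimension), one cannot simply take $N_Y$ to be a finite direct sum of shifts of $M$ covering all such morphisms. A finite $N_Y$ will work precisely when some finite set of morphisms $Y \to M[j_\ell]$ generates the graded $\Ext^*(M, M)$-module $\bigoplus_i \Hom(Y, M[i])$, and establishing this finite generation is where I expect the noetherian hypotheses on $\Lambda$ (respectively, on $\mathbb{X}$ and its base ring) to enter in an essential way.
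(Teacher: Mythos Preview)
Your outline for the ``only if'' direction and the overall architecture of the converse (build a left approximation of $Y$ by shifts of $M$, take the fibre to get an $M$-ghost, iterate, use the octahedral axiom to show the cone of the $n$-fold composite lies in $\thick{n}{M}$, then split) is exactly the skeleton the paper uses. The divergence, and the genuine gap, is precisely at the point you flag.

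Your proposed fix---finite generation of $\bigoplus_i \Hom(Y,M[i])$ as a graded $\Ext^*(M,M)$-module---is not what the paper does, and is not a consequence of the noetherian hypotheses in the setup. What the noetherian hypothesis buys is only that each individual $\Hom(Y,M[i])$ is a finitely generated $R$-module; it says nothing about finiteness across all shifts simultaneously, and for general $M$ the graded $\Ext$-algebra need not be noetherian nor need its modules be finitely generated. So as stated your plan does not close: you cannot in general produce $N_Y \in \thick{1}{M}$ (a \emph{finite} sum of shifts of $M$) with the required lifting property inside $\Db\A$.

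The paper resolves this by leaving $\Db\A$. It works instead in the right-bounded category $\Dm\A$, where it first proves (this is the technical heart, occupying an entire section) that componentwise products of ``descending'' families exist; in particular $\prod_{i\in\Z} M[i]^{a_i}$ makes sense once $a_i=0$ for $i\ll 0$. Using that each $\Hom(Y,M[i])$ is finitely generated over $R$ and vanishes for $i\ll 0$, one takes $N_Y=\prod_i M[i]^{a_i}$ and obtains a left $\Prod\thick{}{M}$-approximation of $Y$. Your iteration/octahedral argument then shows $X\in\thick{n}{\Prod\thick{}{M}}$ in $\Dm\A$. The final step is a cocompactness argument: bounded complexes are shown to be cocompact in $\Dm\A$, and the dual of a result of Bondal--van den Bergh then lets one replace $\Prod\thick{}{M}$ by $\thick{}{M}$, yielding $X\in\thick{n}{M}$. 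In short, the missing idea is: do not try to make $N_Y$ finite; enlarge the ambient category so that the needed infinite product exists, and then use (co)compactness to come back.
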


The ``only if'' part of this theorem is the usual Ghost Lemma, the ``if'' part is the principal new result. Actually we prove (and need) a stronger statement (see Theorem~\ref{thm.anti-ghost}), which claims that it even suffices to look at ghosts in the smaller category $\perfL$ (resp.\ $\Db\vectX$).

This paper is organized as follows:

In Section~\ref{sec.setup} we fix and explain the general setup of our paper. We then prove that in both parts of Theorem~\ref{thm.generating_intro} we are in the situation of our general setup.

In Section~\ref{sec.products} we study what kind of products exist in right bounded derived categories. Furthermore we determine which objects are cocompact (that means their contravariant $\Hom$-functors send products to coproducts).

This is a central ingredient for the proof of our converse of the Ghost Lemma in Section~\ref{sec.main_result}. In that section we also show that this converse of the Ghost Lemma gives rise to a proof that certain subcategories of bounded derived categories can only be strongly finitely generated if they coincide with the entire bounded derived category.

\section{General setup}
\label{sec.setup}

In this section we fix the general setup for the rest of the paper. We then show that it generalizes both setups of Theorem~\ref{thm.generating_intro}.

\begin{set} \label{set.general}
Fix a commutative noetherian ring $R$, and $R$-categories $\E \subseteq \A$ such that:
\begin{enumerate}
\item $\A$ is a skeletally small abelian $R$-category and each $\Ext^i$-group ($i \ge 0$) is a finitely generated $R$-module.
\item $\E \subseteq \A$ is a full subcategory closed under extensions and kernels of epimorphisms in $\A$.
\item $\E$ is generating in $\A$, that is for any $A \in \A$ there is an epimorphism $E \to A$ for some $E \in \E$.
\item there is $n \in \mathbb{N}$ such that $\Ext_\A^i(E,F) = 0$ for any $i > n$ and $E,F \in \E$.
\end{enumerate}
\end{set}

\begin{rem} \label{rem.same_Ext}
We will see in Lemma~\ref{lem.Db(E)} that for $E,F \in \E$ one has $\Ext_\E^i(E,F) = \Ext_\A^i(E,F)$, where $\Ext_\E^i(E,F)$ is the Yoneda Ext in the exact category $\E$. Thus (4) of Setup~\ref{set.general} is equivalent to all sufficiently high extensions vanishing in $\E$, that is to $\gldim \E < \infty$.
\end{rem}

We can now give our most general result on strongly finitely generated subcategories of bounded derived categories.

\begin{thm} \label{thm.strongfg_general}
Let $\mathcal{E}$ and $\mathcal{A}$ be as in Setup~\ref{set.general}. Let $\mathcal{T}$ be a thick subcategory of $\Db\A$ such that $\Db\E \subseteq \mathcal{T}$. If $\mathcal{T}$ is strongly finitely generated then $\mathcal{T} = \Db\A$.
\end{thm}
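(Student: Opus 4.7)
The plan is to use Theorem~\ref{thm.anti-ghost} in both directions. Assume $\T = \thick{n}{M}$ with $\Db\E \subseteq \T$, and let $X \in \Db\A$. The ``if'' direction of Theorem~\ref{thm.anti-ghost} reduces the goal $X \in \thick{n}{M}$ to showing that every composition
\[ E_n \xrightarrow{f_n} E_{n-1} \to \cdots \to E_1 \xrightarrow{f_1} X \]
of $n$ $M$-ghosts with sources $E_i \in \Db\E$ vanishes. The same theorem applied to any object of $\Db\E \subseteq \thick{n}{M}$ tells us that every length-$n$ $M$-ghost composition whose target lies in $\Db\E$ is already zero, so the task is to transfer this vanishing from $\Db\E$-targets to the given $X$.

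To effect the transfer, I pick a right-bounded $\E$-resolution $P \to X$, which exists because $\E$ generates $\A$ and $X$ has bounded cohomology. For each $N$ the brutal truncation $P^{\ge -N}$ is a bounded complex of $\E$-objects, hence lies in $\Db\E$, and fits into a triangle
\[ P^{\ge -N} \to X \to P^{<-N} \to P^{\ge -N}[1]. \]
The transition triangle $P^{-N-1}[N+1] \to P^{\ge -N-1} \to P^{\ge -N}$, combined with the assumption $\gldim \E < \infty$, forces the system $\{\Hom(Y, P^{\ge -N})\}_N$ to stabilize for any $Y \in \Db\E$. Consequently, for $N_0$ chosen large enough (depending on the amplitudes of $E_1, \dots, E_n$ and on $\gldim \E$), the inclusion $\iota \colon P^{\ge -N_0} \to X$ induces isomorphisms $\iota_\ast \colon \Hom(E_i, P^{\ge -N_0}) \xrightarrow{\sim} \Hom(E_i, X)$. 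A parallel argument, for which the product-theoretic results of Section~\ref{sec.products} (existence of products in $\Dm\A$ and description of cocompact objects) are needed, is expected to yield at least injectivity of $\iota_\ast$ on $\Hom(M[i], -)$ for all $i \in \mathbb{Z}$.

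Given these two pieces, I lift $f_1$ uniquely to a morphism $\tilde f_1 \colon E_1 \to P^{\ge -N_0}$ with $\iota \tilde f_1 = f_1$, and the injectivity on $\Hom(M[i], -)$ forces $\tilde f_1$ to remain an $M$-ghost. Then
\[ E_n \xrightarrow{f_n} \cdots \xrightarrow{f_2} E_1 \xrightarrow{\tilde f_1} P^{\ge -N_0} \]
is a length-$n$ composition of $M$-ghosts ending at $P^{\ge -N_0} \in \Db\E \subseteq \thick{n}{M}$, and therefore vanishes by Theorem~\ref{thm.anti-ghost}. Transporting back through the bijection $\iota_\ast$ yields $f_1 \circ \cdots \circ f_n = 0$ in $\Hom(E_n, X)$, completing the proof. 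The principal obstacle lies in the second stabilization step: the finiteness $\gldim \E < \infty$ handles bounded $\E$-complexes directly, but for the arbitrary generator $M \in \Db\A$ one has no such a priori Ext-bound, and the desired control over $\Hom(M[i], P^{<-N})$ is exactly what the product and cocompactness results of Section~\ref{sec.products} are designed to provide.
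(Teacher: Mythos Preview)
Your reduction in the first paragraph is valid: the condition you state does imply condition~(4) of Theorem~\ref{thm.anti-ghost} (if (4) failed via $g\colon E_0 \to X$ and ghosts $f'_1,\dots,f'_n$ in $\Db\E$, then $gf'_1$ is again an $M$-ghost, contradicting your condition), hence~(1). However, the ghost-preservation step contains a genuine error of direction. In this paper a map $f$ is a covariant $M$-ghost when $\Hom(f,M[i])=0$, i.e.\ when precomposition with $f$ kills maps \emph{into} $M[i]$. To conclude that $\tilde f_1\colon E_1 \to P^{\ge -N_0}$ is an $M$-ghost you therefore need \emph{surjectivity} of $\iota^\ast\colon \Hom(X,M[i]) \to \Hom(P^{\ge -N_0},M[i])$, not injectivity of $\iota_\ast$ on $\Hom(M[i],-)$; the condition you wrote is the one for contravariant ghosts. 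Once corrected, the required surjectivity is elementary and does not need Section~\ref{sec.products}: only finitely many $i$ with $\Hom(E_1,M[i]) \ne 0$ are relevant (because $E_1 \in \Db\E$ and $\gldim\E < \infty$), and for those $i$ and $N_0$ large enough one has $\Hom(P^{<-N_0}[-1],M[i])=0$ since $P^{<-N_0}[-1] \in \Dle{-N_0}{\A}$ while $M[i]$ is bounded.

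More to the point, the whole lifting detour is unnecessary. Condition~(4) in Theorem~\ref{thm.anti-ghost} is deliberately formulated so that the $n$ ghosts live entirely inside $\Db\E$, with the dependence on $X$ isolated in a single final arbitrary map $\Db\E \to X$. Since every $E \in \Db\E$ lies in $\thick{n}{M}$, the implication (1)$\Rightarrow$(4) applied with $E$ in place of $X$ gives $(\G^M|_{\Db\E})^n = 0$ outright; then~(4) holds trivially for \emph{every} $X \in \Db\A$, and (4)$\Rightarrow$(1) finishes. That is the paper's two-line proof. The work from Section~\ref{sec.products} that you sensed was needed has already been absorbed into the proof of Theorem~\ref{thm.anti-ghost} itself (specifically into (2)$\Rightarrow$(1)); no further use of products or cocompactness is required to pass from Theorem~\ref{thm.anti-ghost} to Theorem~\ref{thm.strongfg_general}.
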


We give a proof of this theorem at the end of this paper.

The following observation and proposition show that Setup~\ref{set.general} generalizes both setups of Theorem~\ref{thm.generating_intro}. In particular Theorem~\ref{thm.strongfg_general} implies Theorem~\ref{thm.generating_intro}.

\begin{obs}
Let $\Lambda$ be a noetherian algebra over a commutative noetherian ring $R$. Then for $\A = \modL$ and $\E = \projL$ the assumptions of Setup~\ref{set.general} are satisfied. Note that in this case $\Db\E = \perfL$.
\end{obs}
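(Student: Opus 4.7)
The observation is essentially a verification, so my plan is to check conditions (1)--(4) of Setup~\ref{set.general} in turn, and then unpack the identification $\Db\E = \perfL$. None of the individual steps look hard; the only place where a little care is needed is the finite-generation claim in (1) and the standard identification of $K^b(\projL)$ with the perfect complexes.

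First I would record the basic consequence that $\Lambda$, being module-finite over the noetherian ring $R$, is itself left noetherian, so $\modL$ is abelian and skeletally small (every finitely generated module is isomorphic to a quotient of some $\Lambda^n$, and there is only a set of such quotients). For the $\Ext$-groups in (1), I would pick a resolution $P_\bullet \to M$ by finitely generated projectives and observe that each $\Hom_\Lambda(P_i,N)$ is a direct summand of $N^{k_i}$ for some $k_i$; since $N$ is finitely generated over $\Lambda$ and $\Lambda$ is finitely generated over $R$, the module $N^{k_i}$ is finitely generated over the noetherian ring $R$, hence so is the subquotient $\Ext_\Lambda^i(M,N)$.

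For (2), the key point is that if $P\twoheadrightarrow Q$ is an epimorphism between finitely generated projectives, then because $Q$ is projective the map splits, so the kernel is a summand of $P$, hence projective; extensions of projectives split for the same reason. Condition~(3) is immediate since every finitely generated $\Lambda$-module admits a surjection from some $\Lambda^n\in\projL$, and (4) holds with $n=0$ by the very definition of projectivity.

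Finally I would argue that $\Db\projL$, viewed as the bounded derived category of the split exact category $\projL$, is nothing but the bounded homotopy category $K^b(\projL)$: indeed every short exact sequence in $\projL$ splits, so there are no non-trivial quasi-isomorphisms to invert. The canonical functor $K^b(\projL)\to \Db\modL$ is then fully faithful, and its essential image is exactly $\perfL$ by the standard characterization of perfect complexes as those quasi-isomorphic to a bounded complex of finitely generated projectives. Putting everything together gives the observation. The mildest pitfall I would watch out for is to make sure that ``finitely generated over $R$'' really does propagate through subquotients (which it does, thanks to $R$ being noetherian); everything else is formal.
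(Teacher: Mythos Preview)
Your verification is correct. The paper states this as an observation without proof, treating all four conditions of Setup~\ref{set.general} and the identification $\Db{\projL}=\perfL$ as standard; your write-up supplies exactly the routine checks one would expect, with no gaps.
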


\begin{prop} \label{prop.setup_coh}
Let $\X$ be a projective scheme over a commutative noetherian ring $R$. Then for $\A = \cohX$ and $\E = \vectX$, the subcategory of all locally free coherent sheaves (also known as vector bundles), the assumptions of Setup~\ref{set.general} are satisfied.
\end{prop}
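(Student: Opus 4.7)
My plan is to verify the four axioms of Setup~\ref{set.general} in turn for $\A = \cohX$ and $\E = \vectX$. Since $\X$ is projective over $R$, I will first fix a closed immersion $\iota\colon \X \hookrightarrow \mathbb{P}^N_R$ and a very ample line bundle $\OX(1)$. Via \v Cech cohomology on the standard affine cover of $\mathbb{P}^N_R$, the cohomological dimension of $\X$ for quasi-coherent sheaves is bounded by $N$; this bound will power both the finite generation statement and the vanishing statement below.

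The technical heart is the local-to-global Ext spectral sequence
\[ E_2^{p,q} = H^p(\X, \mathcal{E}xt^q_{\OX}(F,G)) \Longrightarrow \Ext^{p+q}_\A(F,G). \]
For coherent $F, G$, each $\mathcal{E}xt^q(F,G)$ is coherent, since affine-locally on some $\Spec A \subseteq \X$ it reduces to $\Ext^q_A$ of finitely generated modules over a noetherian ring. Serre's finiteness theorem, applied to $\iota_* \mathcal{E}xt^q(F,G)$ on $\mathbb{P}^N_R$, then makes each $E_2^{p,q}$ finitely generated over $R$ and forces $E_2^{p,q} = 0$ for $p > N$. Hence for each $n$ only finitely many $(p,q)$ with $p+q = n$ contribute, and $\Ext^n_\A(F,G)$ is an iterated extension of finitely many finitely generated $R$-modules; this gives axiom (1). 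For axiom (4), I observe that when $E \in \vectX$ one has $\mathcal{E}xt^q(E,F) = 0$ for $q > 0$ (locally, $E$ is free, hence projective), so the spectral sequence collapses to $\Ext^i_\A(E,F) \cong H^i(\X, E^\vee \otimes F)$, which vanishes for $i > N$.

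For axiom (2) I will argue stalkwise: in a short exact sequence $0 \to V' \to V \to V'' \to 0$ with $V', V'' \in \vectX$, at each $x \in \X$ the sequence of $\OX_x$-modules splits because $V''_x$ is free (hence projective), so $V_x$ is free; coherence of $V$ together with free stalks then forces $V$ to be locally free. Similarly, the kernel of a surjection $V \twoheadrightarrow V''$ in $\vectX$ is coherent, and its stalks are kernels of surjections of finite free modules over local rings, hence projective and therefore free. For axiom (3), I will use Serre's theorem: any $F \in \cohX$ has some twist $F(n)$ globally generated, producing a surjection $\OX(-n)^{\oplus r} \twoheadrightarrow F$ from a vector bundle.

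The main subtlety will be the finite generation argument in (1): the spectral sequence has no a priori bound in the $q$-direction, so the step requiring care is the observation that the horizontal bound $p \le N$ is what ensures only finitely many diagonal entries $(p, n-p)$ survive in each total degree $n$. The other axioms are essentially stalkwise or follow directly from classical projective-geometry results.
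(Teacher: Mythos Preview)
Your proposal is correct and takes a genuinely different route from the paper for axioms~(1) and~(4). The paper first establishes $\Ext^i_\cohX(\mathcal{L},\mathcal{Y}) \in \modR$ for locally free $\mathcal{L}$ via the identification $\Ext^i(\mathcal{L},\mathcal{Y}) \cong H^i(\X,\mathcal{L}^\vee\otimes\mathcal{Y})$ together with Serre finiteness, and then extends to arbitrary coherent $\mathcal{X}$ by induction on $i$, using a single-step locally free cover $0 \to \mathcal{K} \to \mathcal{L} \to \mathcal{X} \to 0$ and the resulting long exact sequence. Axiom~(4) then falls out of the same identification plus the \v{C}ech bound. You instead run the local-to-global $\Ext$ spectral sequence once and for all: coherence of the sheaf $\Ext$'s plus Serre finiteness and the horizontal bound $p\le N$ give~(1) for arbitrary coherent $F,G$ in a single stroke, and~(4) is the degenerate case $\mathcal{E}xt^q(E,-)=0$ for $q>0$. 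Your argument is more uniform and avoids the induction; the paper's has the virtue of using only elementary tools from Hartshorne and no spectral sequences.

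One point you glide over that the paper treats with care: the local-to-global spectral sequence naturally converges to $\Ext^{p+q}$ computed in $\OX$-modules (or in $\QcohX$), not a priori to the Yoneda $\Ext^{p+q}_\A$ in $\A=\cohX$, which is what Setup~\ref{set.general}(1) actually demands. The paper devotes the discussion around Lemma~\ref{lem.loc_coh} precisely to this issue, showing that $\Dm\cohX \to \Der\QcohX$ is fully faithful so that the two $\Ext$ groups coincide. You write the abutment as $\Ext^{p+q}_\A(F,G)$ without comment; you should make that identification explicit, since otherwise the spectral sequence does not obviously speak to the quantity in the Setup.
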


Before proving the proposition, we shortly discuss a technical issue. In the literature, the groups $\Ext^i(\mathcal{X},\mathcal{Y})$, where $\mathcal{X}, \mathcal{Y} \in \cohX$, are considered in various categories. We can take the Yoneda $\Ext$ in the category of all sheaves of $\OX$-modules, in the category of quasi-coherent sheaves, or in the category of coherent sheaves. It turns out that under suitable assumptions, the base category does not matter. Here we outline for convenience of the reader and also for future reference how to pass from $\QcohX$ to $\cohX$. First a definition:

\begin{defn}
Given a Grothendieck category $\B$, we call an object $B \in \B$ \emph{finitely presentable} if the functor $\Hom_\B(B,-)\colon \B \to \Ab$ commutes with direct limits. The category $\B$ is called \emph{locally coherent} if the full subcategory $\fp(\B)$ of finitely presentable objects of $\B$ is an abelian subcategory, and each $C \in \B$ is a direct limit of objects from $\fp(\B)$.
\end{defn}

Then we have the following well-known lemma:

\begin{lem} \label{lem.loc_coh}
Given any skeletally small abelian category $\A$, there is a locally coherent Grothendieck category $\B$ and a fully faithful exact functor $H\colon \A \to \B$, which induces an equivalence $\A \to \fp(\B)$. Moreover, such $\B$ is unique up to equivalence and the canonical functor $\Dm\A \to \Der\B$, where $\Der\B$ stands for the unbounded derived category of $\B$, is fully faithful.
\end{lem}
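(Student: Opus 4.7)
The plan is to realize $\B$ as the Ind-completion of $\A$, concretely as the category $\mathrm{Lex}(\A^{\mathrm{op}}, \Ab)$ of left-exact additive functors from $\A^{\mathrm{op}}$ to abelian groups, with $H$ the Yoneda embedding $A \mapsto \Hom_\A(-,A)$. This is Gabriel's classical construction. First I would verify that $\B$ is a Grothendieck category: a generator is given by the coproduct $\bigoplus_A h_A$ over a skeleton of $\A$, and filtered colimits in $\B$ are computed pointwise (using that a filtered colimit of left-exact functors on an abelian category remains left exact), so they are exact.

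Next, $H$ is fully faithful by Yoneda, and exact: left-exactness is immediate, while right-exactness uses that $\A$ has cokernels and that the cokernel of $h_{A'} \to h_A$ in $\B$ is represented by the cokernel in $\A$. I would then identify $\fp(\B)$ with $H(\A)$. Each $h_A$ is finitely presentable because $\Hom_\B(h_A, -)$ is evaluation at $A$, which commutes with the pointwise filtered colimits. Conversely, every finitely presentable object is a cokernel of a map between representables, hence lies in $H(\A)$. The subcategory $\fp(\B)$ is abelian because $H$ is exact, which is precisely what makes $\B$ locally coherent. Uniqueness up to equivalence follows from the standard fact that any locally coherent Grothendieck category is canonically equivalent to the Ind-completion of its subcategory of finitely presentable objects.

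The main obstacle is the full faithfulness of $\Dm\A \to \Der\B$. The two crucial ingredients are: (a) $\A = \fp(\B)$ is closed under kernels in $\B$ (part of local coherence), so a morphism in $\A$ has the same kernel whether computed in $\A$ or in $\B$; and (b) $\A$ is generating in $\B$, meaning every object of $\B$ admits an epimorphism from an object of $\A$, which follows because every object of $\B$ is a filtered colimit of objects of $\A$ and any epimorphism $B \twoheadrightarrow A$ onto an object $A$ of $\A$ factors through some $A' \in \A$ surjecting onto $A$, by finite presentability of $A$. Using (a) and (b), an iterated pullback argument shows that any Yoneda $n$-extension in $\B$ between objects of $\A$ can be refined to an $n$-extension with all middle terms in $\A$, giving $\Ext^n_\A(A,A') \cong \Ext^n_\B(HA, HA')$ for all $n \ge 0$ and all $A, A' \in \A$.

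Finally, combining the $\Ext$-isomorphism with the standard triangulated argument that any quasi-isomorphism in $K^-(\B)$ targeting a complex of $\fp(\B)$-objects can be refined by one in $K^-(\A)$ (again via (a) and (b)), one deduces that $\Dm\A \to \Dm\B$ is fully faithful. Composing with $\Dm\B \to \Der\B$, which is fully faithful since $\B$ is Grothendieck and hence admits K-injective resolutions of bounded above complexes, completes the argument. The hardest step is (b) together with the refinement of Yoneda extensions; everything else is formal once the construction $\B = \mathrm{Lex}(\A^{\mathrm{op}},\Ab)$ is fixed.
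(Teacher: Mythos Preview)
Your approach is the paper's: realize $\B$ as the Ind-completion of $\A$ (the paper simply cites Crawley-Boevey for this construction and for uniqueness) and obtain full faithfulness of $\Dm\A \to \Der\B$ from the fact that any complex in $\Cm\B$ with homologies in $\A$ admits a quasi-isomorphism from a complex in $\Cm\A$. One correction, though: your statement (b) is false as written---not every object of $\B$ receives an epimorphism from a single object of $\A$ (take $\A$ to be the finitely presented abelian groups, $\B = \Ab$, and consider $\Q \in \B$). What is true, and what you actually justify and need, is the factorization property: any epimorphism $B \twoheadrightarrow A$ in $\B$ with $A \in \A$ factors through some $A' \in \A$ still surjecting onto $A$. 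This, together with (a), is exactly what the step-by-step approximation of complexes and of Yoneda extensions requires, so your argument goes through once (b) is restated. Two inessential points: the separate $\Ext$-isomorphism detour is subsumed by the complex approximation and can be dropped, and $\Dm\B \to \Der\B$ is fully faithful for any abelian $\B$ by truncation of roofs, so invoking K-injective resolutions is unnecessary.
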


\begin{proof}
For the existence of $H\colon \A \to \B$ and the uniqueness of $\B$, we refer to~\cite[\S\S1.4 and 2.4]{CB}. Abusing notation as usual, we will identify $\A$ with the essential image of $H$.
For the last part, note that given any complex $M \in \Cpx\B$ whose homologies $H^i(M)$ belong to $\A$ for all $i \in \Z$ and vanish for $i\gg0$, one can, using standard arguments, construct a quasi-isomorphism $X \to M$ with $X \in \Cm\A$. It easily follows that $\Dm\A \to \Der\B$ is fully faithful (a more detailed argument in a different context will be given Lemma~\ref{lem.Db(E)}).
\end{proof}

Getting back to our situation, note that if $\X$ is a projective scheme over a commutative noetherian ring and $\A = \cohX$, we can take $\B = \QcohX$ and for $H$ the obvious inclusion. One can infer this from results in~\cite[\S II.5]{Hart} and especially from~\cite[Proposition II.5.15]{Hart}. Now, a direct consequence of Lemma~\ref{lem.loc_coh} is that $\Ext^i_\A(\mathcal{X},\mathcal{Y}) \cong \Ext^i_\B(\mathcal{X},\mathcal{Y})$ for each $\mathcal{X},\mathcal{Y} \in \A$ and $i \ge 0$.

\begin{proof}[Proof of Proposition~\ref{prop.setup_coh}]
Note that by assumption $\X = \Proj\frac{R[X_1, \ldots, X_n]}{I}$ for some $n$ and some homogeneous ideal $I$. The second point of Setup~\ref{set.general} is immediate and the third follows from~\cite[Proposition II.5.15]{Hart} and the fact that sheaves associated to free graded $\frac{R[X_1, \ldots, X_n]}{I}$-modules are locally free.

For the first point, one has $H^i(\X,\mathcal{Y}) \in \modR$ for any $\mathcal{Y} \in \cohX$ by~\cite[Theorem~III.5.2]{Hart}, where $H^i(\X,\mathcal{Y})$ is the sheaf cohomology as in~\cite[\S III.2]{Hart}. By~\cite[Proposition III.6.3]{Hart}, we have $H^i(\X,\mathcal{Y}) \cong \Ext^i(\OX,\mathcal{Y})$ for each $i \ge 0$, where the $\Ext$ groups are taken in the category of sheaves of $\OX$-modules.
Inspecting~\cite[Exercise III.3.6]{Hart}, one concludes that for noetherian schemes the same isomorphisms hold when the $\Ext$-groups are taken in the category $\QcohX$ (see also~\cite[Proposition III.2.5]{Hart}). Next, Lemma~\ref{lem.loc_coh} tells us that also
\[
\Ext^i_{\cohX}(\OX,\mathcal{Y}) \cong H^i(\X,\mathcal{Y}) \in \modR \quad
\textrm{ for each $\mathcal{Y} \in \cohX$ and $i \ge 0$.}
\]
Using a version of~\cite[Proposition III.6.7]{Hart} for quasi-coherent sheaves, we obtain for each $i \ge 0$, $\mathcal{L} \in \vectX$ and $\mathcal{Y} \in \cohX$:
\[
\Ext_{\cohX}^i(\mathcal{L}, \mathcal{Y}) \cong
\Ext_{\cohX}^i(\OX, \mathcal{L}^{\vee} \otimes \mathcal{Y}) \in \modR.
\]

Taking into account that the vector bundles generate $\cohX$, we will show by induction on $i \ge 0$ that $\Ext_{\cohX}^i(\mathcal{X}, \mathcal{Y}) \in \modR$ for every $\mathcal{X},\mathcal{Y} \in \cohX$. Namely, fix any short exact sequence
\[
0 \to \mathcal{K} \to \mathcal{L} \to \mathcal{X} \to 0
\]
with $\mathcal{L}$ locally free and apply $\Hom_\cohX(-,\mathcal{Y})$. If $i = 0$, the exact sequence
\[ 0 \to \Hom_\cohX(\mathcal{X}, \mathcal{Y}) \to \Hom_\cohX(\mathcal{L}, \mathcal{Y}) \]
provides evidence that $\Hom_\cohX(\mathcal{X}, \mathcal{Y})$ is a finitely generated $R$-module. For $i > 0$, consider the exact sequence
\[
\Ext_\cohX^{i-1}(\mathcal{K}, \mathcal{Y}) \to
\Ext_\cohX^i(\mathcal{X}, \mathcal{Y}) \to
\Ext_{\cohX}^i(\mathcal{L}, \mathcal{Y}).
\]
Then $\Ext_\cohX^{i-1}(\mathcal{K}, \mathcal{Y}) \in \modR$ by the inductive hypothesis and we know that $\Ext_{\cohX}^i(\mathcal{L}, \mathcal{Y}) \in \modR$ by the argument above. Hence $\Ext_\cohX^i(\mathcal{X}, \mathcal{Y})$ is a finitely generated $R$-module as well.

Finally by equality of \v{C}ech cohomology and sheaf cohomology, \cite[Theorem III.4.5]{Hart}, one easily sees that $H^i \equiv 0$ for $i > n$. Thus by the isomorphisms above, the final point of Setup~\ref{set.general} is satisfied.
\end{proof}

\section{Products and cocompact objects in $\Dm\A$}
\label{sec.products}

In order to prove the main results in Section~\ref{sec.main_result}, we first need to show that $\Dm\A$ has certain products of infinite families of objects, and to characterize cocompact objects in $\Dm\A$. We explain the latter concept in Definition~\ref{def.cocomp}.

As long as we are concerned with the abelian category of complexes $\Cm\A$, the situation with products and coproducts is easy. If a family of complexes $(X_i \mid i \in I)$ has a product or a coproduct, it is computed componentwise. This easily follows for instance from the adjoint formulas in~\cite[Lemma~3.1]{Gil}. It is an immediate observation that products and coproducts of the following (in general countably infinite) families always exist and coincide:

\begin{defn} \label{def.tends-down}
We say that a family $(X_i \mid i \in I)$ of complexes from $\Cm\A$ is \emph{descending} provided the following conditions are satisfied:
\begin{enumerate}
 \item There is $N \in \Z$ \st $X_i \in \Cle{N}{\A}$ for each $i \in I$.
 \item For each $n \in \Z$, there are only finitely many indices $i \in I$ \st the $n$-th component of $X_i$ is non-zero.
\end{enumerate}
\end{defn}

In fact, since we assume that $\A$ is a $\Hom$-finite category over a noetherian ring, it is not difficult to see that any family $(X_i \mid i \in I)$ which has a product or coproduct in $\Cm\A$ must be descending.

A little more tricky point is to see that we can compute products and coproducts of descending families in this way also in $\Dm\A$.
For coproducts, the situation is still covered by classical results. Namely, by Lemma~\ref{lem.loc_coh}, we can embed $\A$ into a locally coherent Grothendieck category $\B$ so that $\Dm\A \to \Der\B$ is fully faithful.
Since coproducts in $\B$ are exact, coproducts of quasi-isomorphisms are quasi-isomorphisms again. 
Using this fact one easily sees that coproducts in $\Der\B$ are computed componentwise.
It follows that the coproduct of a descending family of complexes from $\Dm\A$ belongs to $\Dm\A$ again.

Unfortunately, one cannot simply give a dual argument for products since products in $\B$ are in general not exact. For instance, the exactness is well-known to fail for $\B = \QcohP$; see~\cite[Example 4.9]{Krause}. However, using fully the assumptions in Setup~\ref{set.general}, we still can prove:

\begin{prop} \label{prop.prod}
If $(X_i \mid i \in I)$ is a descending family of complexes of $\Dm\A$, then the componentwise product of the family is a product in the category $\Dm\A$.

On the other hand, if $\prod_{i \in I} X_i$ exists in $\Dm\A$ for some family $(X_i \mid i \in I)$, then there is a family $(X'_i \mid i \in I)$ of isomorphic complexes in $\Dm\A$ \st $(X'_i \mid i \in I)$ is a descending family.
\end{prop}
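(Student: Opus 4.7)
For the first part, my plan is to work inside the locally coherent Grothendieck category $\B$ of Lemma~\ref{lem.loc_coh}, using the induced fully faithful embedding $\Dm\A \hookrightarrow \Der\B$. The componentwise product $P$ lies in $\Cle{N}{\A}$ by descending condition~(1), and by condition~(2) each cohomology $H^q(P)$ is canonically the finite direct sum $\bigoplus_i H^q(X_i)$ in $\A$. To verify the universal property in $\Dm\A$, I would check that the natural map $\Hom_{\Dm\A}(Z, P) \to \prod_i \Hom_{\Dm\A}(Z, X_i)$ induced by the chain-level projections is bijective; using the forthcoming equivalence $\Dm\E \cong \Dm\A$ of Lemma~\ref{lem.Db(E)}, one can reduce by d\'evissage along distinguished triangles and the truncation tower of a chosen $\E$-resolution of $Z$ to the case $Z = E \in \E$. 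For such $E$, the hyperext spectral sequence $E_2^{p,q} = \Ext^p_\A(E, H^q(-)) \Rightarrow \Hom_{\Dm\A}(E, -[p+q])$ reduces the problem to the $E_2$-level comparison $\Ext^p_\A(E, H^q(P)) \to \prod_i \Ext^p_\A(E, H^q(X_i))$, which is an isomorphism because $H^q(P)$ is a finite direct sum and $\Ext^p_\A(E, -)$ commutes with finite direct sums. Convergence of the spectral sequences in the bounded-above setting then transports the iso to the abutment.

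For the second part, suppose $P = \prod X_i$ exists in $\Dm\A$ with representative $\tilde P \in \Cle{N}{\A}$. Applying the universal property with the test family $f_{i_0} = \id_{X_{i_0}}$ and $f_i = 0$ for $i \neq i_0$ yields a section $\sigma_{i_0}\colon X_{i_0} \to P$ of $\pi_{i_0}$; hence each $X_i$ is a direct summand of $P$ and the endomorphisms $e_i := \sigma_i \pi_i \in \End_{\Dm\A}(P)$ are pairwise orthogonal idempotents, since $\pi_i \sigma_j = \delta_{ij}\id$. Applying $H^n$ gives pairwise orthogonal idempotents $H^n(e_i) \in \End_\A(H^n(\tilde P))$ with $H^n(X_i)$ as their images. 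In particular $H^n(\tilde P) = 0$ for $n > N$ forces $H^n(X_i) = 0$ for all $i$ and all $n > N$, which will yield descending condition~(1). Moreover, by condition~(1) of Setup~\ref{set.general}, $\End_\A(H^n(\tilde P))$ is finitely generated over the noetherian ring $R$, so it cannot contain the infinite direct sum $\bigoplus_i R \cdot H^n(e_i)$ of nonzero $R$-submodules that an infinite family of nonzero pairwise orthogonal idempotents would produce; consequently only finitely many $H^n(e_i)$ are nonzero, so only finitely many $X_i$ have nonzero cohomology in each degree.

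To upgrade this cohomology-level finiteness to the component-level descending condition~(2) requires choosing the representatives $X'_i$ carefully. Setting $d = \gldim \E$ (finite by Remark~\ref{rem.same_Ext}), I would build a representative $X'_i \in \Cle{N}{\E}$ of $X_i$ whose $k$-th component is nonzero only when $H^m(X_i) \neq 0$ for some $m$ in the bounded window $[k, k+d]$. Such tightly-supported $\E$-resolutions are constructed inductively from the Postnikov tower of $X_i$: for each $n$ with $H^n(X_i) \neq 0$, resolve $H^n(X_i)$ by an $\E$-complex of length at most $d$ (using closure of $\E$ under kernels of epimorphisms and the generating property, conditions (2)--(4) of the Setup), and splice these resolutions across the truncation triangles $\tau^{\leq n-1} X_i \to \tau^{\leq n} X_i \to H^n(X_i)[-n] \to$. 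Then $\{i : X'^k_i \neq 0\} \subseteq \bigcup_{j=0}^{d} \{i : H^{k+j}(X_i) \neq 0\}$ is a finite union of finite sets, verifying condition~(2), while $X'_i \in \Cle{N}{\E}$ takes care of condition~(1). The main obstacle I anticipate is precisely this last construction: the inductive splicing of $\E$-resolutions across the Postnikov triangles has to be done in a way that keeps the support confined to the advertised window, and this is the step that uses all four conditions of Setup~\ref{set.general} simultaneously.
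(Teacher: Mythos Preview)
For the first paragraph of the proposition your spectral-sequence route is genuinely different from the paper's, and there are two gaps you would have to close before it goes through.

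First, the hyperext spectral sequence $E_2^{p,q} = \Ext^p_\A(E, H^q(P))$ for $E \in \E$ is a half-plane spectral sequence: $H^q(P) = 0$ for $q > N$, but there is no a~priori upper bound on $p$, because Setup~\ref{set.general}(4) only gives $\Ext^p_\A(E,F) = 0$ for $F \in \E$, not for arbitrary $F \in \A$. So you do not yet know that each antidiagonal carries only finitely many nonzero terms, and hence you cannot simply ``transport the iso to the abutment''. (The vanishing $\Ext^p_\A(E,A) = 0$ for $p > \gldim\E$ and all $A \in \A$ is in fact true, but establishing it is essentially the content of the paper's Lemma~\ref{lem.cut-middle}.)

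Second, even granting the case $Z = E \in \E$, your d\'evissage along the truncation tower does not reach an arbitrary $Z \in \Dm\A$ when $Z$ is unbounded below. The target $P$ is only bounded above, so Lemma~\ref{lem.truncating-morph} does not apply to give $\Hom(Z,P) \cong \Hom(\tau^{\ge i}Z,P)$ for $i \ll 0$; you would need a Milnor $\varprojlim{}^1$-argument, and the vanishing of that $\varprojlim{}^1$ is again not for free here.

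The paper's approach bypasses both issues at once. It proves Lemma~\ref{lem.cut-middle}: every left fraction representing a morphism $Y \to X$ with $X \in \Dle{N}{\E}$ can be rewritten with roof in $\Dle{N+d}{\E}$, where $d = \gldim\E$. Applying this to each $f_i\colon Y \to X_i$ yields a descending family of roofs $(Z_i)$, and the product morphism is then the componentwise product of these fractions --- no spectral sequence or limit argument is needed.

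Your treatment of the second paragraph is correct and close in spirit to the paper's (which uses an infinite chain of proper split epimorphisms between finitely generated $R$-modules rather than orthogonal idempotents, to the same end). Your Postnikov construction is more than necessary, though: once you know $H^n(X_i) = 0$ for $n > N$ and that for each $n$ only finitely many $i$ have $H^n(X_i) \ne 0$, simply take $X'_i$ to be any representative in $\Cle{b_i}{\A}$ where $b_i = \max\{n : H^n(X_i) \ne 0\}$ (and $X'_i = 0$ if $X_i$ is acyclic). Then $\{i : (X'_i)^k \ne 0\} \subseteq \{i : b_i \ge k\} \subseteq \bigcup_{m=k}^{N} \{i : H^m(X_i) \ne 0\}$ is a finite union of finite sets, so the family $(X'_i)$ is descending without any control on the length of $\E$-resolutions.
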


We need a to prove a few lemmas first. We begin by describing the relations between the derived categories of $\E$ and $\A$. Here we consider $\E$ as an exact category with the exact structure induced from $\A$, so quasi-isomorphisms between complexes over $\E$ are those chain complex morphisms whose mapping cones are acyclic in $\E$.

\begin{lem} \label{lem.Db(E)}
In the following diagram with the canonical functors, all the functors are fully faithful and the upper one is a triangle equivalence:
\[
\begin{CD}
\Dm\E @>>> \Dm\A  \\
@AAA       @AAA   \\
\Db\E @>>> \Db\A
\end{CD}
\]
\end{lem}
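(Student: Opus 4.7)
The two vertical inclusions are fully faithful by the standard identification of $\Db(-)$ with the full subcategory of $\Dm(-)$ whose objects have only finitely many non-vanishing cohomologies; this requires only the truncation functors available in the ambient abelian (respectively exact) category. Since the square commutes, fully faithfulness of the bottom horizontal arrow will follow from that of the top one together with that of the right vertical, so the real work is to show that $\Dm\E \to \Dm\A$ is a triangle equivalence.

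The key technical input is a resolution lemma: for every $X \in \Cm\A$ there exists $P \in \Cm\E$ together with a quasi-isomorphism $P \to X$ in $\Cm\A$. I would construct $P$ by descending induction on degree. Picking $N$ with $X \in \Cle{N}{\A}$, at the top one uses that $\E$ generates $\A$ to choose an epimorphism $P^N \twoheadrightarrow X^N$ with $P^N \in \E$, and extends step-by-step downwards via pullback along the previously constructed component $P^n \to X^n$. At each stage the discrepancy between the partial $P$ and $X$ is measured by a syzygy object in $\A$, onto which one again surjects from an object of $\E$; closure of $\E$ under kernels of epimorphisms in $\A$ ensures that the newly produced syzygy itself lies in $\E$, so the induction can continue indefinitely and the resulting chain map $P \to X$ is a quasi-isomorphism.

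With the resolution lemma in hand, essential surjectivity of $\Dm\E \to \Dm\A$ is immediate. For full faithfulness I would use the Verdier calculus of fractions: any morphism $f\colon X \to Y$ in $\Dm\A$ with $X, Y \in \Cm\E$ is represented by a roof $X \xleftarrow{s} Z \xrightarrow{t} Y$ with $s$ a quasi-isomorphism in $\Cm\A$, and applying the resolution lemma to $Z$ yields a quasi-isomorphism $P \to Z$ with $P \in \Cm\E$, so that $f$ is represented by a roof lying entirely in $\Cm\E$; injectivity is handled by the analogous common-refinement trick. Both arguments implicitly rely on the fact that a complex in $\Cm\E$ is acyclic as a complex in the exact category $\E$ if and only if it is acyclic in $\A$, which is a short induction from the top: closure of $\E$ under kernels of epimorphisms in $\A$ ensures that each kernel arising in the factorisations of the differentials already belongs to $\E$. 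Finally, fully faithfulness of the lower horizontal arrow drops out of the commutative square by comparing $\Hom$-groups along $\Db\E \hookrightarrow \Dm\E \to \Dm\A \hookleftarrow \Db\A$.

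The main obstacle I expect is the inductive construction of $P \to X$: one must carefully track the syzygies at each stage and verify that they remain in $\E$ rather than merely in $\A$, which is precisely where both structural hypotheses on $\E$ (generating, and closed under kernels of epimorphisms) are simultaneously required. Once the resolution lemma is in place, everything else is formal manipulation with fractions.
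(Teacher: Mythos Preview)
Your approach is essentially the paper's: both arguments establish the top arrow as an equivalence via a resolution lemma (the paper cites \cite[Lemma~I.4.6]{Hart2}, you sketch the inductive construction) combined with the calculus of right fractions, treat the right vertical as classical, and deduce the bottom arrow from the commuting square. Your observation that acyclicity in $\E$ and in $\A$ coincide for complexes in $\Cm\E$ is exactly the point needed to match quasi-isomorphisms in the two categories, and the paper leaves this implicit.

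The one place where you are too quick is the left vertical $\Db\E \to \Dm\E$. You call it standard, relying on ``truncation functors available in the \dots\ exact category'', but an arbitrary exact category does \emph{not} have good truncation functors: neither $\ker d^n$ nor $\operatorname{coker} d^{n-1}$ need lie in $\E$. The paper handles this by an explicit left-fraction argument: given a quasi-isomorphism $Y \to M$ with $Y \in \Cb\E$ and $M \in \Cm\E$, one truncates $M$ in the ambient abelian category $\A$ and then invokes closure of $\E$ under kernels of epimorphisms in $\A$ to see that the truncated complex lies in $\Cb\E$. So the hypothesis Setup~\ref{set.general}(2) is doing real work here, not just in the acyclicity comparison; you should make that dependence explicit rather than absorbing it into ``standard''.
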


\begin{proof}
Note that since $\E$ is generating in $\A$, for any complex $M$ in $\Cm{\A}$ there is a quasi-isomorphism $X \to M$ with $X \in \Cm{\E}$; see~\cite[Lemma~I.4.6]{Hart2}. Hence the upper horizontal functor is essentially surjective.

To see that the upper horizontal functor is fully faithful, let $X, Y \in \Cm{\E}$, and let $f \in \Hom_{\Dm{\A}}(X, Y)$. Then $f$ is represented by a right fraction $g\sigma^{-1}$, where $\sigma$ is a quasi-isomorphism. That is, there is $M \in \Cm{\A}$ such that $g\sigma^{-1}$ is in the upper row of the following diagram.
\[ \xymatrix{
X && \ar[ll]^{\rm qis}_{\sigma} M \ar[rr]^{g} && Y \\
&& Z \ar[u]_{\rm qis}^{\tau}
} \]
Now there exists a quasi-isomorphism $\tau$ from $Z \in \Cm{\E}$ to $M$ by the discussion above, and we have
\[ f = g \sigma^{-1} = (g \tau) (\sigma \tau)^{-1} \in \Hom_{\Dm{\E}}(X, Y). \]
Similarly one sees that if a map vanishes in $\Dm{\A}$, then it already vanishes in $\Dm{\E}$, whence the upper horizontal functor is faithful.

To see that the left vertical functor is fully faithful, note that for any quasi-isomorphism $Y \to M$ with $Y \in \Cb{\E}$ and $M \in \Cm{\E}$, there is a quasi-isomorphism $\tau\colon M \to W$ with $W \in \Cb{\E}$ (given by truncation in $\Cm\A$ -- this is possible since $\E$ is closed under kernels of epimorphisms in $\A$). Hence for each left fraction $f = \sigma^{-1}g \in \Hom_{\Dm{\E}}(X, Y)$ with $X,Y \in \Db{\E}$ we have $\sigma^{-1}g = (\tau \sigma)^{-1} (\tau g) \in \Hom_{\Db{\E}}(X, Y)$. One also easily checks that $\sigma^{-1}g$ vanishes in $\Hom_{\Dm{\E}}(X,Y)$ \iff it vanishes in $\Hom_{\Db{\E}}(X, Y)$.

The fact that the right vertical functor is fully faithful is classical (and can be seen similarly). It follows from the diagram that also the lower horizontal functor is fully faithful.
\end{proof}

Therefore, it suffices to prove the existence of products of descending families in $\Dm\E$. Here we aim to exploit the fact that by Setup~\ref{set.general}(4) and in view of Remark~\ref{rem.same_Ext}, $\E$ has finite global dimension as an exact category. Let us first establish a basic property of Yoneda $\Ext$-groups in $\E$.

\begin{lem} \label{lem.Yoneda-chains}
Assume we have $n \ge 1$ and two exact sequences
\[ \varepsilon_i\colon \quad 0 \to Y \to E_{1,i} \to E_{2,i} \to \dots \to E_{n,i} \to X \to 0 \qquad (i=1,2) \]
in $\E$ \st $[\varepsilon_1] = [\varepsilon_2]$ in $\Ext^n_\E(X,Y)$. Then there is a commutative diagram of the following form with rows exact in $\E$ and inflations in all columns:
\[
\xymatrix@=22pt{
\varepsilon_1\colon \quad
0 \ar[r] &
Y \ar[r] \ar@{=}[d] &
E_{1,1} \ar[r] \ar@{ >->}[d] &
E_{2,1} \ar[r] \ar@{ >->}[d] &
\dots \ar[r] &
E_{n,1} \ar[r] \ar@{ >->}[d] &
X \ar[r] \ar@{=}[d] &
0
\\
\,\,\eta\colon \quad
0 \ar[r] &
Y \ar[r] &
F_1 \ar[r] &
F_2 \ar[r] &
\dots \ar[r] &
F_n \ar[r] &
X \ar[r] &
0
\\
\varepsilon_2\colon \quad
0 \ar[r] &
Y \ar[r] \ar@{=}[u] &
E_{1,2} \ar[r] \ar@{ >->}[u] &
E_{2,2} \ar[r] \ar@{ >->}[u] &
\dots \ar[r] &
E_{n,2} \ar[r] \ar@{ >->}[u] &
X \ar[r] \ar@{=}[u] &
0
}
\]
\end{lem}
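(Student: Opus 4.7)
The statement is the classical fact that Yoneda equivalence of $n$-extensions in an exact category can always be realized by a single cospan whose column maps are inflations. My plan is to extract such a refinement directly from the definition of Yoneda $\Ext$, using pushouts and pullbacks in $\E$ whose good behavior is guaranteed by the closure properties of Setup~\ref{set.general}(2).

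The first step is to reduce the equality $[\varepsilon_1] = [\varepsilon_2]$ in $\Ext^n_\E(X,Y)$ to a convenient form. By the definition of Yoneda $\Ext$ (which agrees in $\E$ and $\A$ by Remark~\ref{rem.same_Ext}), it is witnessed by a finite zigzag of chain maps of $n$-extensions, each identity on $X$ and $Y$. I would collapse this zigzag to a single cospan $\varepsilon_1 \to \zeta \leftarrow \varepsilon_2$ by iteratively replacing each interior span $\zeta_{i-1} \leftarrow \zeta_i \to \zeta_{i+1}$ with its componentwise pushout in $\A$. Each resulting pushout object fits in a short exact sequence in $\A$ whose outer terms lie in $\E$, hence lies in $\E$ by closure under extensions; and componentwise pushouts of exact sequences of complexes remain exact in $\A$.

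Given such a cospan, I would construct $\eta$ as follows. At the $Y$-end, set $F_1 = E_{1,1} \cup_Y E_{1,2}$; this object lies in $\E$ by the argument above, and both induced maps $E_{1,i} \to F_1$ are inflations, as pushouts of the inflations $Y \to E_{1,i}$. For $2 \le k \le n-1$, define $F_k$ inductively by pushouts absorbing the previous term and the corresponding data coming from the cospan apex $\zeta$. At the $X$-end, set $F_n = E_{n,1} \times_{\zeta_n} E_{n,2}$; this pullback is the kernel of a deflation, so it lies in $\E$ by closure under kernels of epimorphisms, while the induced maps $E_{n,i} \to F_n$ are split monomorphisms, and hence inflations. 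The chain maps $\varepsilon_i \to \eta$ can then be read off the construction, with the role of $\zeta$ being to provide the compatibility data gluing $\varepsilon_1$ and $\varepsilon_2$ at the $X$-end.

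The main obstacle will be verifying that these pushouts and pullbacks assemble consistently into an exact $n$-extension $0 \to Y \to F_1 \to \cdots \to F_n \to X \to 0$ in $\E$, with the column data forming genuine chain maps. In particular, one must check that the induced differentials $F_k \to F_{k+1}$ are well-defined, that each $F_k$ stays in $\E$ throughout the induction, and that the naturality conditions from the cospan propagate. The role of the hypothesis $[\varepsilon_1] = [\varepsilon_2]$ is crucial at the pullback step: without the compatibility data encoded in $\zeta$, the most natural common refinement of $\varepsilon_1$ and $\varepsilon_2$ would be an $n$-extension of $X \oplus X$ by $Y$, and it is precisely the equivalence of Ext classes that makes the reduction to $X$ possible.
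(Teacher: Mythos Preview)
Your reduction to a single cospan $\varepsilon_1 \to \zeta \leftarrow \varepsilon_2$ via iterated componentwise pushouts is fine and is also what the paper does. The trouble is the next paragraph, where you try to upgrade this cospan to one with inflations in the columns.

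The pullback step is simply oriented the wrong way. From $F_n = E_{n,1} \times_{\zeta_n} E_{n,2}$ the universal property furnishes projections $F_n \to E_{n,i}$, not maps $E_{n,i} \to F_n$; there is no canonical morphism $E_{n,1} \to E_{n,2}$ that would let you factor through the pullback, and nothing forces $E_{n,i} \to \zeta_n$ to split. So the claim that ``the induced maps $E_{n,i} \to F_n$ are split monomorphisms'' has no content. The inductive description of the intermediate $F_k$ is also too vague to carry weight, and there is no reason the difference map $E_{n,1}\oplus E_{n,2}\to\zeta_n$ should be a deflation.

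What you are missing is the paper's device for producing inflations \emph{before} collapsing the zigzag, rather than after. Given a morphism $f\colon \eta_{i\pm 1}\to \eta_i$ of $n$-extensions, one replaces its target by $\eta_i \oplus \xi_2 \oplus \cdots \oplus \xi_n$, where $\xi_\ell$ is the split-exact two-term sequence on the $\ell$-th middle object of $\eta_{i\pm1}$; the new morphism $(f,q_2,\dots,q_n)^t$ is then a componentwise inflation by construction. One checks that this modification can be made consistently along the whole zigzag. With all arrows in the zigzag already inflations, each pushout of a span $\eta_{i-1}\leftarrow \eta_i \to \eta_{i+1}$ yields inflations into the apex (pushouts of inflations are inflations in an exact category), and composing inflations keeps inflations. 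Iterating reduces the zigzag to the desired length-two cospan with inflations throughout; no separate treatment of the $X$-end is needed.
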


\begin{proof}
Since $[\varepsilon_1] = [\varepsilon_2]$, there is by definition a finite collection $\eta_0, \dots, \eta_m$ of exact sequences in $\E$ with $n$ middle terms together with morphisms
\[
\xymatrix@!=9pt{
& \eta_1 &&
\eta_3 &&&&&
\eta_{m-1}
\\
\varepsilon_1 = \eta_0 \;\quad \ar[ur] &&
\eta_2 \ar[ul] \ar[ur] &&
\eta_4 \ar[ul] \ar[ur] &&
\ar@{}[ul]|{\dots} &
\eta_{m-2} \ar[ul] \ar[ur] &&
\quad\;\; \eta_m = \varepsilon_2, \ar[ul]
}
\]
\st the components at $X$ and $Y$ are the identity morphisms.

First note that the chain of morphisms can be taken so that all components in the morphisms are inflations. Indeed, we can replace each morphism
\[
\xymatrix@=22pt{
\eta_{i\pm 1}\colon \ar[d]_f &
0 \ar[r] &
Y \ar[r] \ar@{=}[d] &
F_1 \ar[r] \ar[d] &
F_2 \ar[r] \ar[d] &
\dots \ar[r] &
F_n \ar[r] \ar[d] &
X \ar[r] \ar@{=}[d] &
0
\\
{_{\phantom{\pm 1}}}\eta_i\colon &
0 \ar[r] &
Y \ar[r] &
F'_1 \ar[r] &
F'_2 \ar[r] &
\dots \ar[r] &
F'_n \ar[r] &
X \ar[r] &
0
}
\]
by the morphism $f' = (f, q_2, \dots, q_n)^t\colon \eta_{i\pm 1} \to \eta_i \oplus \xi_2 \oplus \dots \oplus \xi_n$, where $q_\ell$ stands for the morphism
\[
\xymatrix@=22pt{
\eta_{i\pm 1}\colon \ar[d]_{q_\ell} &
0 \ar[r] &
Y \ar[r] \ar[d] &
\dots \ar[r] &
F_{\ell-1} \ar[r] \ar[d] &
F_\ell \ar[r] \ar@{=}[d] &
\dots \ar[r] &
X \ar[r] \ar[d] &
0
\\
{_{\phantom{\pm 1}}}\xi_\ell\colon &
0 \ar[r] &
0 \ar[r] &
\dots \ar[r] &
F_\ell \ar[r]^{1_{F_\ell}} &
F_\ell \ar[r] &
\dots \ar[r] &
0 \ar[r] &
0
}
\]
Of course, if for a fixed $i$ we do such a substitution for $f\colon \eta_{i-1} \to \eta_i$, we must also replace the original adjacent morphism $g\colon \eta_{i+1} \to \eta_i$ by $g' = (g, 0, \dots, 0)^t\colon \eta_{i+1} \to \eta_i \oplus \xi_2 \oplus \dots \xi_n$. Similarly, if we replace the morphism $\eta_{i+1} \to \eta_i$, we must correspondingly change $\eta_{i-1} \to \eta_i$. However, this does not pose any problem since if (in the notation above) $g$ is an inflation, so will be $g'$.

Now, if $m=2$, we are done. If $m>2$, we construct the pushout diagram
\[
\xymatrix@!=9pt{
& \tilde\eta & \\
\eta_1 \ar[ur] && \eta_3 \ar[ul] \\
& \eta_2 \ar[ul] \ar[ur] &
}
\]
and replace the original chain of morphisms by
\[
\xymatrix@!=9pt{
& \tilde\eta &&&&&
\eta_{m-1}
\\
\varepsilon_1 = \eta_0 \;\quad \ar[ur] &&
\eta_4 \ar[ul] \ar[ur] &&
\ar@{}[ul]|{\dots} &
\eta_{m-2} \ar[ul] \ar[ur] &&
\quad\;\; \eta_m = \varepsilon_2. \ar[ul]
}
\]
After finitely many repetitions, we reduce the length of the chain to two.
\end{proof}

Next we prove a crucial lemma about left fractions in $\Dm\E$.

\begin{lem} \label{lem.cut-middle}
Let $d = \gldim\E$, $N \in \Z$, and $f\colon Y \to X$ be a morphism in $\Dm\E$ \st $X \in \Dle{N}\E$. Then $f$ can be represented by a fraction
\[
\xymatrix{
& Z
\\
Y \ar[ur]^g &&
X \ar[ul]_\sigma^{\rm qis}
}
\]
with $Z \in \Dle{N+d}\E$.
\end{lem}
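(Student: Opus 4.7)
My approach is to represent $f$ by a left fraction and then replace the intermediate complex by its smart truncation at level $N+d$.

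First, by the left calculus of fractions in $\Dm\E$, I would write $f = \sigma_0^{-1} g_0$ for some left roof $Y \xrightarrow{g_0} Z' \xleftarrow{\sigma_0} X$ in $\Cm\E$ with $\sigma_0$ a quasi-isomorphism. Since $\sigma_0$ is a qis and $X \in \Dle{N}\E$, we have $H^i(Z') = 0$ for all $i > N$. I would then replace $Z'$ by the smart truncation $Z := \tau^{\le N+d} Z'$, whose only non-obvious component is the top one, $K^{N+d} := \Ker(d^{N+d}_{Z'})$. Since $Z' \in \Cm\E$ is bounded above by some $m$ and $H^{j+1}(Z') = 0$ for every $j \ge N$, the sequence $0 \to K^j \to Z'^j \to K^{j+1} \to 0$ is short exact in $\A$ for every $j \ge N$. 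Starting from $K^m = Z'^m \in \E$ and descending, each $K^j$ is a kernel of an epimorphism between objects of $\E$, hence lies in $\E$ by Setup~\ref{set.general}(2); in particular $K^{N+d} \in \E$, so $Z \in \Cle{N+d}\E$. The same analysis shows the cofibre of the canonical inclusion $\iota: Z \hookrightarrow Z'$ is a complex of $\E$-objects acyclic in the exact category $\E$, so $\iota$ is a quasi-isomorphism in $\Dm\E$.

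It then remains to lift the roof $(g_0, \sigma_0)$ through $\iota$. The chain map $\sigma_0$ lifts on the nose: its components vanish in degrees $>N+d$ (because $X^i = 0$ for $i > N$), and the chain-map identity $d_{Z'}^{N+d}\sigma_0^{N+d} = \sigma_0^{N+d+1} d_X^{N+d} = 0$ forces the image of $\sigma_0^{N+d}$ to land in $K^{N+d}$; the resulting chain map $\sigma: X \to Z$ satisfies $\iota\sigma = \sigma_0$ and is a qis by two-out-of-three. The hardest part is to produce a chain map $g: Y \to Z$ such that the new roof $(g, \sigma)$ represents $f$. The original $g_0$ does not factor through $\iota$ in general, so I would modify it by a chain homotopy killing its components in degrees $> N+d$; the obstructions to performing such a homotopy lie in Ext-groups of $\E$ which are to be successively resolved using the finite global dimension $d = \gldim\E$. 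This final step is the main technical obstacle and explains why the bound in the statement is $N+d$ rather than $N$; everything else is routine manipulation of smart truncations and of the closure properties of $\E$ from Setup~\ref{set.general}.
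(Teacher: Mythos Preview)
Your reduction up to the point where you produce the smart truncation $\iota\colon Z \hookrightarrow Z'$ is fine, and you correctly observe that $\sigma_0$ lifts on the nose through $\iota$. The gap is in the final step, and it is a real one: the quasi-isomorphism $\iota$ points \emph{into} $Z'$, so to get a left fraction through $Z$ you must lift $g_0$ through $\iota$ in the homotopy category $\mathbf{K}^-(\E)$. This is equivalent to showing that the composition $Y \to Z' \to Z'/Z$ is null-homotopic, where $Z'/Z$ is an $\E$-acyclic complex concentrated in degrees $\ge N+d$. The obstructions to building such a null-homotopy degree by degree are classes in $\Ext^1_\E(Y^i, K)$ for various cycle objects $K$; these are \emph{first} Ext-groups, and there is no dimension-shifting mechanism that converts them into $\Ext^{>d}$-groups. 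Finite global dimension of $\E$ does not make $\E$-acyclic bounded complexes contractible, nor does it give $Y$ any lifting property against them. So the sentence ``the obstructions \dots\ are to be successively resolved using the finite global dimension'' does not carry weight, and without it the argument does not close.

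The paper's proof avoids this difficulty by producing a quasi-isomorphism pointing \emph{out of} the intermediate complex: starting from a fraction $Y \xrightarrow{g'} W \xleftarrow{\tau} X$ with $W \in \Dle{N+d'}\E$ and $d' > d$, it observes that the top $d'+1$ terms of $W$ form an exact sequence representing a class in $\Ext^{d'}_\E\big(W^{N+d'}, Z^N(W)\big) = 0$, and then uses Lemma~\ref{lem.Yoneda-chains} to manufacture a chain map $\upsilon\colon W \to W'$ with $W' \in \Dle{N+d'-1}\E$ which is a quasi-isomorphism. Now one simply composes: $f = (\upsilon\tau)^{-1}(\upsilon g')$, with both legs honest chain maps. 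Iterating brings the middle term down to $\Dle{N+d}\E$. The point is that the vanishing of $\Ext^{>d}_\E$ is used to shorten the \emph{target} complex via an outgoing qis, not to lift maps through an incoming one.
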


\begin{proof}
Let us take any left fraction $\tau^{-1} g'$ representing $f$, where $\tau\colon X \to W$ is a quasi-isomorphism, and suppose that $W \in \Dle{N+d'}\E$ for some integer $d' > d$. Since $H^i(X) = 0$ for all $i > N$ when considering $X$ as a complex in $\Dm\A$, the sequence
\[
\varepsilon_1: \quad 0 \to Z^N(W) \to W^N \to \dots \to W^{N+d'-1} \to W^{N+d'} \to 0
\]
is exact in $\A$. Using the assumption of Setup~\ref{set.general}(2) that $\E$ is closed under kernels of epimorphisms in $\A$, we get $Z^i(W) \in \E$ for all $i \ge N$. We can view $\varepsilon_1$ as a representative of an element of $\Ext^{d'}_\E\big(W^{N+d'}, Z^N(W)\big)$. Now the equivalence class of $\varepsilon_1$ must vanish in $\Ext^{d'}_\E\big(W^{N+d'}, Z^N(W)\big)$ since $d' > d = \gldim\E$. Using Lemma~\ref{lem.Yoneda-chains} for $\varepsilon_1$ and any $\varepsilon_2$ in which the last morphism onto $W^{N+d'}$ splits, we obtain a commutative diagram in $\E$ of the form
\[
\xymatrix@=22pt{
0 \ar[r] &
Z^N(W) \ar[r] \ar@{=}[d] &
W^N \ar[r] \ar@{ >->}[d]_{j_1} &
\dots \ar[r] &
W^{N+d'-1} \ar[r] \ar@{ >->}[d]_{j_{d'}} &
W^{N+d'} \ar[r] \ar@{=}[d] &
0\phantom{,}
\\
0 \ar[r] &
Z^N(W) \ar[r] &
F_1 \ar[r] &
\dots \ar[r] &
F_{d'} \ar[r]^q &
W^{N+d'} \ar[r] &
0,
}
\]
where the rows are exact and $q$ splits. Thus the inflation $k\colon \Ker q \to F_{d'}$ splits and there is a morphism $r\colon F_{d'} \to \Ker q$ \st $rk = 1_{\Ker q}$. The chain complex morphism $\upsilon\colon W \to W'$ defined by the diagram
\[
\xymatrix@R=22pt@C=14pt{
\dots \ar[r] &
W^{N-1} \ar[r] \ar@{=}[d] &
W^N \ar[r] \ar@{ >->}[d]_{j_1} &
\dots \ar[r] &
W^{N+d'-2} \ar[r] \ar@{ >->}[d]_{j_{d'-1}} &
W^{N+d'-1} \ar[r] \ar@{ >->}[d]_{rj_{d'}} &
W^{N+d'} \ar[d] \ar[r] &
\dots
\\
\dots \ar[r] &
W^{N-1} \ar[r] &
F_1 \ar[r] &
\dots \ar[r] &
F_{d'-1} \ar@{->>}[r] &
\Ker q \ar[r] &
0 \ar[r] &
\dots
}
\]
is a quasi-isomorphism in $\Dm\E$ since it is a quasi-isomorphism in $\Dm\A$, and since the natural functor from $\Dm\E$ to $\Dm\A$ is an equivalence by Lemma~\ref{lem.Db(E)}. It follows $\tau^{-1} g' = (\upsilon\tau)^{-1} \upsilon g'$ in $\Hom_\Dm{\E}(Y,X)$ and the middle term $W'$ of the latter fraction belongs to $\Dle{N+d'-1}\E$. After repeating the procedure finitely many times, we obtain an equivalent fraction with middle term in $\Dle{N+d}\E$.
\end{proof}

Now we are in a position to prove the existence of products.

\begin{proof}[Proof of Proposition~\ref{prop.prod}]
Assume first we have a descending family $(X_i \mid i \in I)$ of objects of $\Dm\E$. Let $f_i\colon Y \to X_i$ be any collection of morphisms. By Lemma~\ref{lem.cut-middle} we can represent these by fractions
\[
\xymatrix{
& Z_i
\\
Y \ar[ur]^{g_i} &&
X_i \ar[ul]_{\sigma_i}^{\rm qis}
}
\]
\st $(Z_i \mid i \in I)$ is a descending family. Hence we can construct the product morphism $f\colon Y \to \prod_{i \in I} X_i$ as the fraction
\[
\xymatrix{
& \prod Z_i
\\
Y \ar[ur]^{(g_i)} &&
\prod X_i \ar[ul]_{\prod \sigma_i}^{\rm qis}.
}
\]
Here, $\prod$ denotes the componentwise products. It is straightforward to show that this product morphism is unique. Thus we have shown that the componentwise product of a descending family in $\Dm\E$ is its product.

If $(X_i \mid i \in I)$ is a descending family in $\Dm\A$, \cite[Lemma~I.4.6]{Hart2} yields for each $i \in I$ a quasi-isomorphism $\tau_i\colon X'_i \to X_i$ with $X'_i \in \Dm\E$. Moreover, we can take the morphisms so that $(X'_i \mid i \in I)$ is a descending family. Clearly, the componentwise product $\prod \tau_i$ is a quasi-isomorphism, so the componentwise product $\prod X_i$ is really a product in $\Dm\A$ by Lemma~\ref{lem.Db(E)}.

Conversely, let us assume that $(X_i \mid i \in I)$ is a family of objects of $\Dm\A$ \st the product $\prod_{i \in I} X_i$ exists in $\Dm\A$. First we show that there is some $N \in \Z$ \st $H^n(X_i) = 0$ for each $n \ge N$ and $i \in I$. Indeed, if for arbitrarily large numbers $n$ there were $i \in I$ \st $H^n(X_i) \ne 0$, it would mean (given the fact that $H^n\colon \Dm\A \to \A$ are additive functors) that $H^n(\prod X_i) \ne 0$ for arbitrary large $n$. This is absurd.

It remains to check that (up to isomorphism) the family $(X_i \mid i \in I)$ satisfies the second point of Definition~\ref{def.tends-down}.
Let us by way of contradiction assume that there is $n \in \Z$ \st $H^n(X_{i_m}) \ne 0$ for infinitely many indices $i_1$, $i_2$, $i_3$, \dots. Then the chain of split epimorphisms
\[
\prod_{i \in I} X_i \to \prod_{i \in I \setminus \{i_1\}} X_i \to \prod_{i \in I \setminus \{i_1,i_2\}} X_i \to \dots
\]
yields a chain of proper split epimorphism
\[
H^n(\prod_{i \in I} X_i) \to H^n(\prod_{i \in I \setminus \{i_1\}} X_i) \to H^n(\prod_{i \in I \setminus \{i_1,i_2\}} X_i) \to \dots
\]
in $\A$. Applying $\Hom_\A\big(H^n(\prod_{i \in I} X_i), -\big)$ gives us an infinite chain of proper split epimorphisms between finitely generated $R$-modules, which is impossible. Therefore, for each $n \in \Z$ there are only finitely many indices $i \in I$ \st $H^n(X_i) \ne 0$ and it is easy to see that $(X_i \mid i \in I)$ can be replaced by an isomorphic family $(X'_i \mid i \in I)$ which is descending.
\end{proof}

If we have an additive category $\T$ with some infinite coproducts, we can consider compact objects. These are defined as those $Y \in \T$ for which $\Hom_\T(Y,-)$ commutes with all coproducts which exist in $\T$, and they play a very important role in the theory of triangulated categories (see for instance~\cite{Nee}). In this paper, the dual concept is relevant:

\begin{defn} \label{def.cocomp}
Let $\T$ be an additive category. An object $Y \in \T$ is \emph{cocompact} if $Y$ is compact in the opposite category $\T^\mathrm{op}$. That is, given any family $(X_i \mid i \in I)$ of objects of $\T$ \st the product $\prod_{i\in I} X_i$ exists in $\T$ (and so the coproduct $\coprod_{i\in I} X_i$ exists in $\T^\mathrm{op}$), then the canonical group homomorphism
\[ \coprod_{i \in I} \Hom_\T(X_i, Y) \to \Hom_\T\big(\prod_{i\in I} X_i, Y\big) \]
is an isomorphism. Equivalently, we can say that $Y$ is cocompact if any morphism $f\colon \prod_{i \in I} X_i \to Y$ in $\T$ factors through the canonical projection $\prod_{i \in I} X_i \to \prod_{i \in J} X_i$ for some finite subset $J \subseteq I$.
\end{defn}

The main result of this section, which is crucial for the next section, is the following characterization of cocompact objects in $\Dm\A$.

\begin{thm} \label{thm.cocomp}
Assume Setup~\ref{set.general}. Then an object $Y \in \Dm\A$ is cocompact \iff $Y$ is isomorphic to a bounded complex over $\A$.
\end{thm}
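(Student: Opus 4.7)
The plan is to use the descending-family structure of products from Proposition~\ref{prop.prod} together with standard $t$-structure manipulations in $\Dm\A$.

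For the direction $Y\in\Db\A \Rightarrow Y$ cocompact, I induct on the cohomological amplitude of $Y$. The class of cocompact objects is closed under triangles, since the natural transformation $\bigoplus_i \Hom_{\Dm\A}(X_i,-) \to \Hom_{\Dm\A}(\prod_i X_i, -)$ of cohomological functors on $\Dm\A$ is then amenable to the five-lemma. The canonical truncation triangle $H^a(Y)[-a] \to Y \to \tau^{\ge a+1}Y \to H^a(Y)[-a+1]$ reduces the problem to showing that objects of $\A$ are cocompact, since shifts are automatically cocompact and the inductive step handles everything else. For $Y\in\A$ and any descending family $(X_i)_{i\in I}$, the $t$-structure vanishing $\Hom_{\Dm\A}(N,Y)=0$ for all $N\in\Dle{-1}{\A}$ gives $\Hom_{\Dm\A}(\prod X_i,Y) \cong \Hom_{\Dm\A}(\tau^{\ge 0}\prod X_i,Y)$. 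A chain-level check, using that finite products commute with cokernels, shows $\tau^{\ge 0}\prod X_i = \prod_i \tau^{\ge 0}X_i$ componentwise. Since each $\tau^{\ge 0}X_i$ is supported in the finite interval $[0,N]$ with only finitely many $X_i$ contributing to each such degree, only finitely many $\tau^{\ge 0}X_i$ are nonzero; the resulting finite product is a direct sum and $\Hom_{\Dm\A}(-,Y)$ splits into $\bigoplus_i \Hom_{\Dm\A}(X_i,Y)$ as required.

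For the converse I argue by contrapositive: suppose $H^{n_k}(Y)\ne 0$ for some strictly decreasing sequence $n_1>n_2>\cdots\to-\infty$. Take $X_k := \tau^{\le n_k}Y$ together with the canonical morphism $f_k\colon X_k\to Y$. The family $(X_k)$ is descending because each $X_k$ lives at chain level in $\Cle{n_k}{\A}\subseteq\Cle{n_1}{\A}$ and, for each degree $p$, the component $(\tau^{\le n_k}Y)^p$ is nonzero only for the finitely many $k$ with $n_k\ge p$. Each $f_k$ is nonzero in $\Dm\A$ since it induces the identity on $H^{n_k}(-)$ and $H^{n_k}(Y)\ne 0$. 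By Proposition~\ref{prop.prod} the product $\prod_k X_k$ exists in $\Dm\A$ and agrees with the coproduct (for descending families both are computed componentwise). Combining the universal property of coproducts with the hypothetical cocompactness of $Y$ yields
\[
\prod_k \Hom_{\Dm\A}(X_k,Y) = \Hom_{\Dm\A}\bigl(\textstyle\coprod_k X_k,Y\bigr) = \Hom_{\Dm\A}\bigl(\prod_k X_k,Y\bigr) = \bigoplus_k \Hom_{\Dm\A}(X_k,Y),
\]
which forces all but finitely many $\Hom_{\Dm\A}(X_k,Y)$ to vanish and contradicts the nonvanishing of each $f_k$. Hence $Y$ must lie in $\Db\A$. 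I expect the main delicate point to be the chain-level commutation of $\tau^{\ge 0}$ with the componentwise product of a descending family in the $(\Leftarrow)$ direction; once that is in hand, both the inductive reduction and the $(\Rightarrow)$ argument are short.
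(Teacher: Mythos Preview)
Your proposal is correct in both directions, and close in spirit to the paper's argument, but you take a noticeably longer route for the ``if'' part.

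For $Y$ bounded $\Rightarrow$ $Y$ cocompact, the paper avoids both the induction on amplitude and the commutation of truncation with products. It simply chooses $N$ with $Y \in \Dge{N}{\A}$; then for any descending family $(X_i)$, all but finitely many $X_i$ lie in $\Cle{N-1}{\A} \subseteq \Dle{N-1}{\A}$, and the single vanishing $\Hom_{\Dm\A}(\Dle{N-1}{\A},\Dge{N}{\A}) = 0$ immediately gives the required factorization through a finite subproduct. Your reduction to $Y \in \A$ via the five lemma and the chain-level identity $\tau^{\ge 0}\prod X_i = \prod_i \tau^{\ge 0}X_i$ (which is indeed fine, since only finitely many components are nonzero in each relevant degree) simply re-derives this same conclusion with more machinery. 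So the ``delicate point'' you flag is real but avoidable.

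For $Y$ cocompact $\Rightarrow$ $Y$ bounded, the paper uses the test family $Z^i(Y)[-i] \to Y$ (cycles concentrated in a single degree) instead of your $\tau^{\le n_k}Y \to Y$. Both families are descending, both have nonzero maps to $Y$ whenever the corresponding cohomology is nonzero, and both lead to the same coproduct/product contradiction you wrote down. Your version works equally well; the paper's stalk family is just slightly more economical since descending is immediate.

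One notational remark: in this paper $\tau^{\ge i}$ and $\tau^{\le i}$ denote \emph{brutal} truncations, whereas you are using them for canonical truncations (as is clear from your triangle $H^a(Y)[-a] \to Y \to \tau^{\ge a+1}Y$). This is harmless for correctness but would need to be reconciled if you incorporate your argument here.
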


\begin{proof}
For the if-part, assume that $Y$ is a bounded complex and $N$ is an integer \st $Y \in \Dge{N}\A$. If $(X_i \mid i \in I)$ is a family of complexes in $\Dm\A$ which has a product, it is up to isomorphism a descending family by Proposition~\ref{prop.prod}. Then, however, all but finitely many $X_i$ belong to $\Dle{N-1}\A$ and so does their product. Using the well-known fact that
\[ \Hom_{\Dm\A}\big( \Dle{N-1}\A, \Dge{N}\A \big) = 0, \]
every morphism $f\colon \prod_{i \in I} X_i \to Y$ factors through the finite product $\prod_{i \in J} X_i$ formed by those $X_i$ which are not isomorphic to a complex in $\Dle{N-1}\A$.

Conversely, assume that $Y$ is cocompact in $\Dm\A$ and fix $N'$ such that $Y \in \Dle{N'}\A$. Consider for each $i \le N'$ the obvious morphism $f_i\colon Z^i(Y)[-i] \to Y$. By Proposition~\ref{prop.prod}, the product $\prod Z^i(Y)[-i]$ exists in $\Dm\A$ and is computed componentwise. Since we assume that $Y$ is cocompact, all but finitely many $f_i$ must vanish in $\Dm\A$, which easily implies that $Y$ has only finitely many non-zero homologies. In particular, $Y$ is isomorphic to a bounded complex.
\end{proof}

\section{Main results}
\label{sec.main_result}

\begin{defn} \label{def.ghost}
Given $M \in \Dm\A$, we say that a morphism $f\colon X \to Y$ is a \emph{(covariant) $M$-ghost} provided
\[ \Hom_{\Dm\A}(f, M[i]) = 0 \quad \textrm{for each $i \in \Z$.} \]
The class of all covariant $M$-ghosts in $\Dm\A$ will be denoted by $\G^M$.
\end{defn}

\begin{rem}
Dually one can define contravariant ghosts. In many papers only contravariant ghosts are considered (and then they are just called ghosts). However in our setup it is more convenient to work with covariant ghosts.
\end{rem}

The notion of ghosts is closely related to a more general representation-theoretic notion of approximations.

\begin{defn} \label{def.approx}
Let $\T$ be a category and $\C \subseteq \T$ be a full subcategory. A morphism $g\colon Y \to C$ in $\T$ is called a \emph{left $\C$-approximation} of $Y$ if $C \in \C$ and any morphism $g'\colon Y \to C'$ with $C' \in \C$ factors through $f$. The subcategory $\C$ is said to be \emph{covariantly finite} in $\T$ provided that every $Y \in \T$ admits some left $\C$-approximation $g\colon Y \to C$.
\end{defn}

To be more specific about the relation between ghosts and approximations, observe that given $Z \in \thick{}{M}$ and a triangle
\[ \xymatrix{ X \ar[r]^f & Y \ar[r]^g & Z \ar[r] & X[1] }, \]
then $f$ is a covariant $M$-ghost \iff $g$ is a left $\thick{}{M}$-approximation.

The Ghost Lemma relates $M$-ghosts and objects in $\thick{n}{M}$:

\begin{lem}[{Ghost Lemma -- see for instance \cite[Lemma~2.2]{Beli}}] \label{lem.classic_ghost}
Let $X$ and $M \in \Dm\A$.
\begin{enumerate}
\item If $X \in \thick{n}{M}$ then $(\G^M)^n(-, X) = 0$.
\item If $\thick{}{M}$ is covariantly finite in $\Dm\A$ then the converse holds.
\end{enumerate}
\end{lem}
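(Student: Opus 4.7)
Plan: The two parts have different flavours and I would prove each by induction on $n$, using different ingredients. A useful preliminary observation that I will use throughout is that the $M$-ghosts form a two-sided ideal in $\Dm\A$: composing an $M$-ghost with any morphism on either side is again an $M$-ghost, as follows directly from inspecting the induced map on $\Hom(-,M[i])$.

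For (1), the base case $n=1$ is immediate: if $X \in \thick{}{M} = \add\{M[i] \mid i \in \Z\}$, any $M$-ghost $\beta\colon Y \to X$ vanishes, since $X$ sits as a summand of a finite sum of shifts of $M$ and each composition $Y \to X \to M[i]$ is zero by definition. For the inductive step, write $X$ as a summand of $\mathrm{cone}(A \to B)$ with $A \in \thick{}{M}$ and $B \in \thick{n-1}{M}$, and let $A \to B \xrightarrow{p} X \xrightarrow{h} A[1]$ be the induced triangle. For a composition of $n$ ghosts $Y = X_n \xrightarrow{\beta_n} \cdots \xrightarrow{\beta_1} X$, the map $h$ factors through a finite sum of shifts of $M$, so $h\beta_1 = 0$ and $\beta_1 = p\tilde\beta_1$ for some $\tilde\beta_1\colon X_1 \to B$. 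The ideal property makes $\tilde\beta_1\beta_2$ a ghost, so $\tilde\beta_1\beta_2, \beta_3, \dots, \beta_n$ composes to a product of $n-1$ ghosts into $B \in \thick{n-1}{M}$, which vanishes by induction. Hence the original composition equals $p \cdot 0 = 0$.

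For (2), assuming $\thick{}{M}$ is covariantly finite, I would build the standard tower of $X$: set $X^0 = X$ and for each $i \geq 0$ form a left $\thick{}{M}$-approximation $g^i\colon X^i \to C^i$, completed to a triangle $X^{i+1} \xrightarrow{f^i} X^i \xrightarrow{g^i} C^i \to X^{i+1}[1]$. Each $f^i$ is an $M$-ghost by the universal property of the approximation. The key lemma is that any composition $\phi\colon Y \to X$ of $n$ $M$-ghosts factors through $\tilde f_n := f^0 f^1 \cdots f^{n-1}\colon X^n \to X$. I would prove this by induction on $n$: for $n=1$, any ghost $\phi$ satisfies $g^0\phi = 0$ (same argument as in (1)), so $\phi$ lifts through $f^0$; for the step, decompose $\phi = f^0 \tilde\beta_1 \beta_2 \cdots \beta_n$, note that $\tilde\beta_1\beta_2$ is a ghost by the ideal property, and invoke the inductive hypothesis on $X^1$ with its own tower $X^1 \leftarrow X^2 \leftarrow \cdots$. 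It follows that $(\G^M)^n(-, X) = 0$ is equivalent to $\tilde f_n = 0$.

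The remaining step -- which I expect to be the principal technical hurdle -- is to conclude $X \in \thick{n}{M}$ from $\tilde f_n = 0$, and I would carry this out with two applications of the octahedral axiom. First, a short induction on $k$ using the octahedron for $X^{k+1} \xrightarrow{f^k} X^k \xrightarrow{f^1 \cdots f^{k-1}} X^1$ shows that $D_k := \mathrm{cone}(f^1 \cdots f^k)$ lies in $\thick{k}{M}$: the resulting triangle $C^k \to D_k \to D_{k-1} \to C^k[1]$ builds $D_k$ from $C^k \in \thick{}{M}$ and $D_{k-1} \in \thick{k-1}{M}$. In particular $D := D_{n-1} \in \thick{n-1}{M}$. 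Next, apply the octahedron to $X^n \xrightarrow{f^1 \cdots f^{n-1}} X^1 \xrightarrow{f^0} X$, whose composition $\tilde f_n$ is zero so its cone is $X \oplus X^n[1]$; the octahedral triangle $D \to X \oplus X^n[1] \to C^0 \to D[1]$ realises $X \oplus X^n[1]$ as built from $D \in \thick{n-1}{M}$ and $C^0 \in \thick{}{M}$, so it lies in $\thick{n}{M}$. Since $\thick{n}{M}$ is closed under summands, $X \in \thick{n}{M}$ as required.
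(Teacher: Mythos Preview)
Your proof is correct and follows essentially the same route as the paper: the lemma is only cited there, but the paper reproduces precisely your tower-of-approximations-plus-octahedron argument for part~(2) inside the proof of Theorem~\ref{thm.anti-ghost}, implication (2)$\Rightarrow$(1) (your extra ``factoring through $\tilde f_n$'' lemma is a correct strengthening the paper does not need, since it only uses that $\tilde f_n$ is itself a composition of $n$ ghosts and hence zero). One small slip in~(1): you pass from ``$X$ is a summand of $\mathrm{cone}(A\to B)$'' directly to a triangle $A\to B\xrightarrow{p} X\xrightarrow{h} A[1]$; you should first prove the vanishing for the full cone and then restrict to the summand $X$ via the split inclusion, which is harmless since ghosts form an ideal.
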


\begin{rem}
\begin{enumerate}
\item The Ghost Lemma holds for any triangulated category, not just $\Dm\A$.
\item First versions of the Ghost Lemma have appeared in \cite{Kelly,Street,Chris}. More recently published versions include~\cite[Corollary 5.5]{Beli2} and~\cite[Lemma~4.11]{Rou}.
\end{enumerate}
\end{rem}

One main result of this paper, showing the converse of the Ghost Lemma in our setup without having covariant finiteness, is as follows:

\begin{thm} \label{thm.anti-ghost}
Let $\E \subseteq \A$ be as in Setup~\ref{set.general}, and fix $X,M \in \Db\A$ and $n \ge 0$. Then the following are equivalent:
\begin{enumerate}
  \item $X \in \thick{n}M$,
  \item $(\G^M)^n(-,X) = 0$,
  \item $(\G^M \!|_{\Db\A})^n(-, X) = 0$,
  \item $ \Hom_{\Dm\A}(\Db\E, X) \circ (\G^M \!|_{\Db\E})^n  = 0$.
\end{enumerate}
\end{thm}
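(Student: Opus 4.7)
The implications $(1)\Rightarrow(2)\Rightarrow(3)\Rightarrow(4)$ are either the easy direction of the classical Ghost Lemma (Lemma~\ref{lem.classic_ghost}(1)) or immediate from the inclusions $\G^M\supseteq\G^M|_{\Db\A}\supseteq\G^M|_{\Db\E}$ combined with the fact that the final morphism appearing in (4) may be arbitrary. The substantive implication is therefore $(4)\Rightarrow(1)$.

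My plan begins with the observation that $\thick{}{M}$, although in general not covariantly finite in $\Dm\A$, is covariantly finite in $\Db\A$: for any $Z\in\Db\A$ the boundedness of both $Z$ and $M$, combined with the standard vanishing $\Hom_{\Dm\A}(\Dle{N-1}{\A},\Dge{N}{\A})=0$, forces $\Hom_{\Dm\A}(Z,M[i])\ne 0$ for only finitely many $i$, and each such group is a finitely generated $R$-module by Setup~\ref{set.general}(1). Assembling finitely many $R$-generators yields a left $\thick{}{M}$-approximation $Z\to C_Z$ with $C_Z$ a finite direct sum of shifts of $M$. Iterating this construction inside $\Db\A$ produces a universal ghost tower $Y_n\to Y_{n-1}\to\cdots\to Y_1\to X$ with each $Y_k\in\Db\A$, and a standard octahedral argument shows that $\mathrm{cone}(Y_n\to X)\in\thick{n}{M}$. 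Thus it suffices to prove $Y_n\to X=0$, for this exhibits $X$ as a direct summand of an object of $\thick{n}{M}$.

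To establish this vanishing from (4) I would combine the triangle equivalence $\Dm\E\simeq\Dm\A$ (Lemma~\ref{lem.Db(E)}), cocompactness of $X$ (Theorem~\ref{thm.cocomp}), and Lemma~\ref{lem.cut-middle}. Pick an $\E$-resolution $\tilde Y_k\in\Cm\E$ of each $Y_k$ and consider the stupid truncations $E^{(m)}_k:=\sigma^{\ge -m}\tilde Y_k\in\Cb\E\subseteq\Db\E$ together with the chain maps they inherit from the tower. For each map $E\to Y_n$ with $E\in\Db\E$, the plan is to realize the composition $E\to Y_n\to Y_{n-1}\to\cdots\to X$ in the pattern $\Hom_{\Dm\A}(\Db\E,X)\circ(\G^M|_{\Db\E})^n$ required by (4), by inductively lifting each intermediate factor through a suitably truncated $\E$-complex via Lemma~\ref{lem.cut-middle} and the finite global dimension of $\E$ from Setup~\ref{set.general}(4), and using that ghosts form a two-sided ideal so that the ghost property propagates along the construction. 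Condition (4) then annihilates every such composition, and cocompactness of $X$ is used to promote this pointwise vanishing to $Y_n\to X=0$. The main obstacle will be exactly this simultaneous $\Db\E$-replacement of the intermediate $Y_k\in\Db\A$ with preservation of the ghost structure at every step: once a single replacement falls outside $\Db\E$ or loses the ghost property the chain no longer lies in the class governed by (4), and this is precisely where the interplay between Lemma~\ref{lem.cut-middle} and the finite global dimension hypothesis does the decisive technical work.
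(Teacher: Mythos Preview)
There is a genuine gap at the very first step of your $(4)\Rightarrow(1)$ argument. You claim that for $Z\in\Db\A$ the group $\Hom_{\Dm\A}(Z,M[i])$ is nonzero for only finitely many $i$, invoking the vanishing $\Hom_{\Dm\A}(\Dle{N-1}\A,\Dge{N}\A)=0$. But this t-structure orthogonality only gives vanishing for $i\ll 0$ (when $M[i]$ sits far to the right of $Z$); for $i\gg 0$ these groups are high $\Ext$-groups in $\A$, and nothing in Setup~\ref{set.general} bounds the global dimension of $\A$. Already for $\Lambda=k[x]/(x^2)$, $\A=\modL$, $\E=\projL$, and $Z=M=k$ one has $\Hom_{\Db\A}(k,k[i])=\Ext^i_\Lambda(k,k)\ne 0$ for every $i\ge 0$. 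Consequently $\thick{}{M}$ is \emph{not} covariantly finite in $\Db\A$, your ``finite'' left approximation $Z\to C_Z$ does not exist, and the maps in the tower you build are not $M$-ghosts. The rest of the argument then has nothing to stand on: you cannot conclude $Y_n\to X=0$, and the invocation of cocompactness of $X$ is idle since no infinite products ever appear in your construction.

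This is exactly the obstacle the paper's proof is designed around. Rather than seeking approximations inside $\Db\A$, one works in $\Dm\A$ and shows that $\Prod\thick{}{M}$ is covariantly finite there: the approximation of $Y$ is the genuinely infinite product $\prod_{i\in\Z} M[i]^{a_i}$, which exists in $\Dm\A$ by Proposition~\ref{prop.prod} precisely because $a_i=0$ for $i\ll 0$ makes the family descending. The ghost tower then lives in $\Dm\A$, the octahedral argument yields $X\in\thick{n}{\Prod\thick{}{M}}$, and only at this point does cocompactness (Theorem~\ref{thm.cocomp} together with the dual of~\cite[Proposition~2.2.4]{BvdB}) enter, to strip the $\Prod$ and conclude $X\in\thick{n}{M}$. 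Your truncation idea for replacing $\Db\A$-ghosts by $\Db\E$-ghosts is close in spirit to the paper's separate step $(4)\Rightarrow(2)$, but it cannot substitute for the missing infinite-product machinery in $(2)\Rightarrow(1)$.
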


Before proving the theorem, we introduce some notation and make a simple observation. For a complex $Y \in \Dm\A$, we denote by $\tau^{\geq i} Y$ the complex given by
\[ (\tau^{\geq i} Y)^j = \left\{ \begin{array}{ll} Y^j & \text{ for } j \geq i \\ 0 & \text{ otherwise} \end{array} \right. , \]
and similarly for $\tau^{\leq i} Y$. The complexes $\tau^{\geq i} Y$ and $\tau^{\leq i} Y$ are called \emph{brutal truncations} of $Y$. Note that for any $Y \in \Dm\E$ and any $i \in \mathbb{Z}$ there is a triangle
\[ \xymatrix{ \tau^{\geq i} Y \ar[r] & Y \ar[r] & \tau^{\leq i-1} Y \ar[r] & \tau^{\geq i} Y[1] }. \]

\begin{lem} \label{lem.truncating-morph}
Let $X \in \Db\A$ and $Y \in \Dm\A$. Then for $i \ll 0$, the morphism $\tau^{\geq i} Y \to Y$ yields an isomorphism

\[ \Hom_{\Dm\A}(Y,X) \overset{\cong}\longrightarrow \Hom_{\Dm\A}(\tau^{\geq i} Y,X). \]
\end{lem}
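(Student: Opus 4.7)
Since $X \in \Db\A$, I can fix an integer $N$ such that $X$ is quasi-isomorphic to a complex concentrated in degrees $\geq N$, i.e.\ $X \in \Dge{N}\A$. I will show that any integer $i < N$ (which covers the ``$i \ll 0$'' case) works.

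The plan is to apply the cohomological functor $\Hom_{\Dm\A}(-, X)$ to the distinguished triangle
\[ \tau^{\geq i} Y \to Y \to \tau^{\leq i-1} Y \to (\tau^{\geq i} Y)[1] \]
recalled just before the lemma. This yields a four-term exact sequence
\[ \Hom_{\Dm\A}(\tau^{\leq i-1}Y, X) \to \Hom_{\Dm\A}(Y, X) \to \Hom_{\Dm\A}(\tau^{\geq i}Y, X) \to \Hom_{\Dm\A}((\tau^{\leq i-1}Y)[-1], X), \]
in which the middle map is the one induced by $\tau^{\geq i}Y \to Y$. Thus it suffices to show that the outer two $\Hom$-groups vanish.

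Both vanishings follow from the standard fact $\Hom_{\Dm\A}(\Dle{a}\A, \Dge{b}\A) = 0$ whenever $a < b$; this is exactly the ``well-known'' vanishing already invoked in the proof of Theorem~\ref{thm.cocomp}. Indeed, by construction $\tau^{\leq i-1}Y \in \Dle{i-1}\A$ and $(\tau^{\leq i-1}Y)[-1] \in \Dle{i}\A$, while $X \in \Dge{N}\A$; the assumption $i < N$ gives $i-1 < N$ and $i < N$, so both outer terms vanish and the middle arrow is the desired isomorphism.

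I do not anticipate a genuine obstacle: the argument is essentially a one-line application of a truncation triangle together with the basic $t$-structure vanishing. The only conceptual point worth noting is that the brutal truncations here are not the good truncations of the canonical $t$-structure, but the distinction is harmless because we only need $\tau^{\leq i-1}Y$ to be concentrated in sufficiently negative degrees, which holds tautologically.
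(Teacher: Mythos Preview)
Your proof is correct and is essentially identical to the paper's own argument: both apply $\Hom_{\Dm\A}(-,X)$ to the brutal-truncation triangle and use the standard vanishing $\Hom_{\Dm\A}(\Dle{a}\A,\Dge{b}\A)=0$ to kill the two outer terms. The only cosmetic differences are that you make the bound $i<N$ explicit (the paper just writes ``for $i\ll 0$'') and that you write $\Hom((\tau^{\le i-1}Y)[-1],X)$ where the paper writes the equivalent $\Hom(\tau^{\le i-1}Y,X[1])$.
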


\begin{proof}
Invoking again the fact that $\Hom_{\Dm\A}\big( \Dle{i-1}\A, \Dge{i}\A \big) = 0$, observe that
\[ \Hom_{\Dm\A}(\tau^{\leq i-1}Y, X) = 0 = \Hom_{\Dm\A}(\tau^{\leq i-1}Y, X[1]) \]
for $i \ll 0$. The statement immediately follows from the existence of the triangles $\tau^{\geq i} Y \to Y \to \tau^{\leq i-1} Y \to \tau^{\geq i}[1]$.
\end{proof}

Now we return to the proof of the theorem.

\begin{proof}[Proof of Theorem~\ref{thm.anti-ghost}]
(1) $\implies$ (2) is the usual Ghost Lemma (Lemma~\ref{lem.classic_ghost}(1)).

(2) $\implies$ (3) and (3) $\implies$ (4) are trivial.

(4) $\implies$ (2). Assume, conversely to (2), that we have a non-zero composition
\[
\xymatrix{
X_n \ar[r] &
X_{n-1} \ar[r] &
\dots \ar[r] &
X_1 \ar[r] &
X
}
\]
of $M$-ghosts in $\Dm\E$ (since $\Dm\E \cong \Dm\A$; here we replace $X$ by an isomorphic object which lies in $\Dm\E$). Possibly replacing the $X_i$ by isomorphic objects we may assume that the maps above are represented by chain complex morphisms (rather than fractions).

We reduce this chain inductively to a non-zero composition of $M$-ghosts in $\Db\E$.

First we use Lemma~\ref{lem.truncating-morph} to truncate $X_n$ so that $\Hom_{\Dm\A}(\tau^{\geq i_n}X_n, X) \cong \Hom_{\Dm\A}(X_n, X)$. In particular, the induced map from $\tau^{\geq i_n} X_n$ to $X$ is still non-zero.

Now note that for $i \leq i_n$ we obtain an induced map $\xymatrix{ \tau^{\geq i_n} X_n \ar[r] & \tau^{\geq i} X_{n-1}}$ making the following diagram commutative.
\[
\xymatrix{
X_n \ar[r] &
X_{n-1} \ar[r] &
\dots \ar[r] &
X_1 \ar[r] &
X_0
\\
\tau^{\geq i_n} X_n \ar[u] \ar[r] &
\tau^{\geq i} X_{n-1} \ar[u]
}
\]
We want this map $\xymatrix{ \tau^{\geq i_n} X_n \ar[r] & \tau^{\geq i} X_{n-1}}$ to also be an $M$-ghost. Set
\[ J = \{j \in \mathbb{Z} \mid \Hom_{\Db\A}(\tau^{\geq i_n} X_n, M[j]) \neq 0\}. \]
Note that $\Hom_{\Db\A}(\tau^{\geq i_n} X_n, M[j]) = 0$ for $j \ll 0$, and by Lemma~\ref{lem.cut-middle} also for $j \gg 0$, so the set $J$ is finite. Now, using Lemma~\ref{lem.truncating-morph} one sees that for $i \ll 0$ we have
\[ \Hom_{\Dm\A}(\tau^{\geq i} X_{n-1}, M[j]) \cong \Hom_{\Dm\A}(X_{n-1}, M[j]) \quad \forall j \in J. \]
Hence, since the map $\xymatrix{ \tau^{\geq i_n} X_n \ar[r] & X_{n-1}}$ is an $M$-ghost, there is $i_{n-1}$ such that the map $\xymatrix{ \tau^{\geq i_n} X_n \ar[r] & \tau^{\geq i_{n-1}} X_{n-1}}$ also is an $M$-ghost.

Going on inductively, we construct a commutative diagram
\[
\xymatrix{
X_n \ar[r] &
X_{n-1} \ar[r] &
\dots \ar[r] &
X_1 \ar[r] &
X
\\
\tau^{\geq i_n} X_n \ar[u] \ar[r] &
\tau^{\geq i_{n-1}} X_{n-1} \ar[u] \ar[r] &
\dots \ar[r] &
\tau^{\geq i_1} X_1 \ar[u] \ar[r] &
\tau^{\geq i_0} X \ar[u]
}
\]
\st the objects below belong to $\Db\E$, all horizontal morphisms are $M$-ghosts, and the composition from the left lower to the right upper corner of the diagram is non-zero. This is exactly the kind of sequence of $M$-ghosts claimed to not exist in (4).

(2) $\implies$ (1). We will show that in $\Dm\A$ we have $X \in \thick{n}{\Prod \thick{}{M}}$, where $\Prod \thick{}{M}$ denotes the category whose objects are products of objects in $\thick{}{M}$, and then use a compactness argument from~\cite{BvdB} to get $X \in \thick{n}{M}$.

To this end, we first claim that $\Prod \thick{}{M}$ is covariantly finite in $\Dm\A$. That is, we must construct a left $\Prod \thick{}{M}$-approximation for any fixed $Y \in \Dm\A$. Notice that given any $i \in \Z$, $\Hom_{\Dm\A}(Y, M[i])$ is a finitely generated $R$-module. Indeed, $\Hom_{\Dm\A}(Y, M[i]) \cong \Hom_{\Dm\A}(\tau^{\geq \ell} Y, M[i])$ for $\ell \ll 0$ by Lemma~\ref{lem.truncating-morph}, and the latter is a finitely generated $R$-module by Setup~\ref{set.general}(1). Having fixed any collection $g_{i,1}, \dots, g_{i,a_i}\colon X \to M[i]$ of generators of that $R$-module (for a suitable $a_i \geq 0$), it is a well-known and easily checked fact that the morphism
\[ g_i = (g_{i,1}, \dots, g_{i,a_i})^t\colon Y \longrightarrow M[i]^{a_i} \]
is a left $\add M[i]$-approximation of $Y$. Recall further that $\Hom_{\Dm\A}(Y, M[i]) = 0$ for $i \ll 0$, so that we can take $a_i = 0$ for $i \ll 0$. Therefore, the product $\prod_{i \in \Z} M[i]^{a_i}$ exists in $\Dm\A$ and it is computed componentwise (see Proposition~\ref{prop.prod}). A little more standard checking, using the universal property of products, shows that the morphism
\[ g\colon Y \longrightarrow \prod_{i \in \Z} M[i]^{a_i}, \]
whose components are the above defined maps $g_i\colon Y \to M[i]^{a_i}$, is a left $\Prod \thick{}{M}$-approx\-imation of $Y$. This finishes the proof of the claim.

Now it follows that $X \in \thick{n}{\Prod \thick{}{M}}$ by essentially the same arguments as used for Lemma~\ref{lem.classic_ghost}(2). We quickly recall them for the convenience of the reader.

First construct a sequence of triangles
\begin{align*}
X_{\phantom{0}} \longrightarrow \prod_{i \in \Z} M[i]^{a_i} &\longrightarrow X_1 \overset{f_1}{\longrightarrow} X[1]_{\phantom{0}} \\
X_1 \longrightarrow \prod_{i \in \Z} M[i]^{b_i} &\longrightarrow X_2 \overset{f_2}{\longrightarrow} X_1[1], \\
& \quad\!\vdots \\
X_{n-1} \longrightarrow \prod_{i \in \Z} M[i]^{z_i} &\longrightarrow X_n \overset{f_n}{\longrightarrow} X_{n-1}[1],
\end{align*}
where the leftmost morphisms are left $\Prod \thick{}{M}$-approximations. It follows that the rightmost morphisms, labeled $f_i$, are $M$-ghosts.

Now, using the octahedral axiom, one sees that the cone of $f_1[-1] \circ f_2[-2]$ is an extension of $\prod_{i \in \Z} M[i]^{a_i}$ and $\prod_{i \in \Z} M[i-1]^{b_i}$, and thus in $\thick{2}{\Prod \thick{}{M}}$. Iterating this one sees that the cone of $f_1[-1] \circ f_2[-2] \circ \cdots \circ f_n[-n]$ lies in $\thick{n}{\Prod \thick{}{M}}$. But $f_1[-1] \circ f_2[-2] \circ \cdots \circ f_n[-n] \in (\G^M)^n(X_n[-n], X) = 0$. So this triangle splits, and $X \in \thick{n}{\Prod \thick{}{M}}$.

Finally recall that $M$ and $X$ are cocompact in $\Dm\A$ by Theorem~\ref{thm.cocomp}. Thus $X \in \thick{n}{M}$ follows from (the dual of) \cite[Proposition~2.2.4]{BvdB}.
\end{proof}

Now we are able to prove our result on strongly finitely generated categories between $\Db\E$ and $\Db\A$.

\begin{proof}[Proof of Theorem~\ref{thm.strongfg_general}]
Let $\mathcal{T}$ be a strongly finitely generated category, such that $\Db\E \subseteq \mathcal{T} \overset{\text{thick}}{\subseteq} \Db\A$. Let $T$ be a strong generator, that is there is $n \in \mathbb{Z}$ such that $\mathcal{T} = \thick{n}{T}$.

Since $\Db\E \subseteq \mathcal{T}$ in particular we have $E \in \thick{n}{T}$ for any $E \in \Db\E$. Hence, by Theorem~\ref{thm.anti-ghost}((1) $\implies$ (4)), we have that $(\G^M \!|_{\Db\E})^n(-, E) = 0$ for any such $E$, and therefore $(\G^M \!|_{\Db\E})^n = 0$.

Now let $X \in \Db\A$ arbitrary. Then, by Theorem~\ref{thm.anti-ghost}((4) $\implies$ (1)), we have $X \in \thick{n}{T} = \mathcal{T}$. It follows that $\mathcal{T} = \Db\A$.
\end{proof}

We conclude this paper with a consequence of Theorem~\ref{thm.generating_intro}(1), describing how the $\thick{i}{-}$ grow in the case of noetherian algebras.

\begin{cor}
Let $\Lambda$ be a noetherian algebra. For $M \in \Db\modL$ either the inclusions
\[ \thick{1}{\Lambda \oplus M} \subseteq \thick{2}{\Lambda \oplus M} \subseteq \thick{3}{\Lambda \oplus M} \subseteq \cdots \]
are all proper, or $\thick{n}{\Lambda \oplus M} = \Db\modL$ for some $n$.
\end{cor}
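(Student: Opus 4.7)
The plan is to prove the contrapositive: suppose that some inclusion $\thick{n}{\Lambda\oplus M} \subseteq \thick{n+1}{\Lambda\oplus M}$ is an equality, and deduce that $\thick{n}{\Lambda\oplus M} = \Db\modL$. Write $T = \Lambda \oplus M$. The only substantive step beyond invoking Theorem~\ref{thm.generating_intro}(1) is to verify that, under this stabilization hypothesis, $\thick{n}{T}$ is already a thick subcategory of $\Db\modL$.

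To carry this out, I would first record the identity $\thick{k+1}{T} = \mathrm{add}(\thick{k}{T} * \thick{1}{T})$, which follows immediately from the definition of $\thick{k+1}{T}$ as the additive closure of cones of maps $A \to B$ with $A \in \thick{1}{T}$ and $B \in \thick{k}{T}$: one reads off the containment from the triangle $A \to B \to \mathrm{cone}(f) \to A[1]$, and the reverse containment uses the rotation of triangles together with the fact that $\thick{1}{T}$ is closed under shifts. The hypothesis $\thick{n+1}{T} = \thick{n}{T}$ then translates into $\thick{n}{T} * \thick{1}{T} \subseteq \thick{n}{T}$, and a direct induction yields $\thick{n+k}{T} = \thick{n}{T}$ for all $k \ge 0$. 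Combined with the standard containment $\thick{a}{T} * \thick{b}{T} \subseteq \thick{a+b}{T}$, this shows that $\thick{n}{T}$ coincides with $\bigcup_k \thick{k}{T}$, which is the thick subcategory of $\Db\modL$ generated by $T$.

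Since $\Lambda$ is a direct summand of $T$, we have $\Lambda \in \thick{1}{T} \subseteq \thick{n}{T}$; the thick closure of $\Lambda$ in $\Db\modL$ is precisely $\perfL$, so $\perfL \subseteq \thick{n}{T}$. Moreover, $\thick{n}{T}$ is tautologically strongly finitely generated, witnessed by the strong generator $T$ and the level $n$. Both hypotheses of Theorem~\ref{thm.generating_intro}(1) are thus satisfied, and it forces $\thick{n}{T} = \Db\modL$. The only potential obstacle is the stabilization verification of the previous paragraph, and that is routine; the rest is a direct application of the main theorem of the paper.
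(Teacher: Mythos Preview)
Your proof is correct and follows exactly the route the paper intends: the corollary is stated without proof as a direct consequence of Theorem~\ref{thm.generating_intro}(1), and your argument makes this derivation explicit by showing that stabilization at level $n$ forces $\thick{n}{\Lambda\oplus M}$ to be thick, hence to contain $\perfL$, and then invoking the theorem. The stabilization step you spell out (that $\thick{n}{T}=\thick{n+1}{T}$ implies $\thick{n+k}{T}=\thick{n}{T}$ for all $k$, so that $\thick{n}{T}$ equals the full thick closure of $T$) is indeed the only thing to check and is handled correctly.
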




\begin{thebibliography}{99}

\bibitem{Beli2}
  A.\ Beligiannis,
  \emph{Relative homological algebra and purity in triangulated categories},
  J.\ Algebra \textbf{227} (2000), no.\ 1, 268--361.

\bibitem{Beli}
  A.\ Beligiannis,
  \emph{Some ghost lemmas},
  survey for ``The Representation Dimension of Artin Algebras'', Bielefeld 2008,
  \url{http://www.mathematik.uni-bielefeld.de/~sek/2008/ghosts.pdf}

\bibitem{BCR}
  D.\ J.\ Benson, J.\ F.\ Carlson and J.\ Rickard,
  \emph{Thick subcategories of the stable module category},
  Fund.\ Math.\ \textbf{153} (1997), no.\ 1, 59--80. 

\bibitem{BvdB}
  A.\ Bondal and M.\ van den Bergh,
  \emph{Generators and representability of functors in commutative and noncommutative geometry},
  Mosc.\ Math.\ J.\ \textbf{3} (2003), no.\ 1, 1--36,258

\bibitem{Buch}
  R.-O.\ Buchweitz,
  \emph{Maximal Cohen-Macaulay modules and Tate-cohomology over Gorenstein rings},
  unpublished manuscript, available at \url{https://tspace.library.utoronto.ca/handle/1807/16682}, 1987.

\bibitem{Chris} J.\ D.\ Christensen,
  \emph{Ideals in triangulated categories: phantoms, ghosts and skeleta},
  Adv.\ Math.\ \textbf{136} (1998), no.\ 2, 284--339.

\bibitem{CB} W.\ Crawley-Boevey,
  \emph{Locally finitely presented additive categories},
  Comm. Algebra \textbf{22} (1994), no.\ 5, 1641--1674.

\bibitem{Gil} J.\ Gillespie,
  \emph{The flat model structure on $\textrm{Ch}(R)$},
  Trans.\ Amer.\ Math.\ Soc.\ \textbf{356} (2004), no.\ 8, 3369--3390.

\bibitem{Hart} R.\ Hartshorne,
  \emph{Algebraic geometry},
  Graduate Texts in Mathematics, No.\ 52, Springer-Verlag, New York-Heidelberg, 1977.

\bibitem{Hart2} R.\ Hartshorne,
  \emph{Residues and duality},
  Lecture notes of a seminar on the work of A.\ Grothendieck, given at Harvard 1963/64,
  Lecture Notes in Mathematics, No.\ 20 Springer-Verlag, Berlin-New York 1966.

\bibitem{Kelly} G.\ M.\ Kelly,
  \emph{Chain maps inducing zero homology maps},
  Proc.\ Cambridge Philos.\ Soc.\ \textbf{61} (1965), 847--854.

\bibitem{Krause} H.\ Krause,
  \emph{The stable derived category of a Noetherian scheme},
  Compos.\ Math.\ \textbf{141} (2005), no.\ 5, 1128--1162.

\bibitem{Lunts} V.\ A.\ Lunts,
  \emph{Categorical resolution of singularities},
  J.\ Algebra \textbf{323} (2010), no.\ 10, 2977--3003. 


\bibitem{Nee} A.\ Neeman,
  \emph{Triangulated Categories},
  Annals of Mathematics Studies, 148, Princeton University Press, Princeton, NJ, 2001.

\bibitem{Rou} R.\ Rouquier,
  \emph{Dimensions of triangulated categories},
  J.\ K-Theory \textbf{1} (2008), no.\ 2, 193--256.

\bibitem{Sto} J.\ \v{S}\v{t}ov\'{\i}\v{c}ek,
  \emph{Telescope conjecture, idempotent ideals, and the transfinite radical},
  Trans.\ Amer.\ Math.\ Soc.\ \textbf{362} (2010), no.\ 3, 1475--1489. 

\bibitem{Street}
  R.\ Street,
  \emph{Homotopy classification of filtered complexes},
  J.\ Austral.\ Math.\ Soc.\ \textbf{15} (1973), 298--318. 

\bibitem{Taka}
  R.\ Takahashi,
  \emph{Classifying thick subcategories of the stable category of Cohen-Macaulay modules},
  Adv.\ Math.\ \textbf{225} (2010), no.\ 4, 2076--2116. 

\end{thebibliography}
\end{document}